\def\tank#1{\protected@xdef\@thanks{\@thanks
        \protect\footnotetext[0]{#1}}}
\def\bigfoot{

    \@footnotetext}
\newcommand{\ea}{\end{array}}
\newtheorem{theorem}{Theorem}[section]
\newtheorem{proposition}{Proposition}[section]
\newtheorem{lemma}{Lemma}[section]
\newtheorem{definition}{Definition}[section]
\newenvironment{proof}{Proof.}
\begin{document}
\title {\Large \bf Large Deviation Principle of Stochastic Evolution Equations with reflection} 
\author[1]{Zdzisław Brzeźniak\thanks{
 E-mail:zdzislaw.brzezniak@york.ac.uk}}
\author[2]{Qi Li\thanks{ E-mail:vivien777@mail.ustc.edu.cn}}
\author[3]{Tusheng Zhang\thanks{ E-mail:Tusheng.Zhang@manchester.ac.uk}}

\affil[1]{Department of Mathematics, University of York, Heslington, YO10 5DD, York, United Kingdom}
\affil[2]{School of Mathematical Sciences, University of Science and Technology of China, Hefei, Anhui 230026, China.}
\affil[3]{Department of Mathematics, University of Manchester, Oxford Road,
Manchester, M13 9PL, UK.}
\renewcommand\Authands{ and }
\date{}
\maketitle

\begin{center}
\begin{minipage}{130mm}
{\bf Abstract.}
In this paper, we establish a large deviation principle for stochastic evolution equations with reflection in an infinite dimensional ball. Weak convergence approach plays an important role.

\vspace{3mm} {\bf Keywords.} Large deviations; Stochastic evolution equations; reflection; weak convergence.

\end{minipage}
\end{center}

\section{Introduction}
\setcounter{equation}{0}
 \setcounter{definition}{0}
Let $H$ be a seperable Hilbert space with the norm $|\cdot|$ and inner product $(\cdot,\cdot)$. $D:=B(0,1)$ denotes the open unit ball on $H$. Let $A$ be a self-adjoint, positive define operator on $H$ and let $B$ be a unbounded bilinear map from $H\times H$ to $H$. Let $W$ denote a real-valued Brownian Motion on a filtered probability space $(\Omega,\mathcal{F,\mathbb{F}}=(\mathcal{F}_t)_{t\geq 0},\mathbb{P})$ satisfying the usual conditions. Consider the following stochastic evolution equations(SEEs) with reflection:
\begin{equation}\label{SEE}
\left\{
\begin{aligned}
du(t)&+Au(t)dt =f(u(t))dt +B(u(t),u(t))dt + \sigma(u(t))dW(t)+dL(t), t\geq0,\\
 u(0)& =u_0, \quad u_0\in \overline{D},
\end{aligned}
\right.
\end{equation}
Where $u$ is a $\overline{D}$-valued continuous stochastic process and $L$ ia a $H$-valued continuous stochastic process, which plays the role of a local time. $\overline{D}$ denotes the closed unit ball in the Hilbert space $H$. $f$ and $\sigma$ are measurable mappings.

Stochastic partial differential equations(SPDEs) with reflection can be used to model the evolution of random interfaces near a hard wall, see \cite{FO}. Existence and uniqueness of the above stochastic reflected problems were established in \cite{BZ}. For the study of real-valued SPDEs with reflection we refer the readers to  \cite{NP}, \cite{DP}, \cite{XZ} and references therein.

The purpose of this paper is to establish a  small noise large deviation principle(LDP) for the following  SEEs with reflection:
\begin{equation}\label{1.1}
\left\{
\begin{aligned}
du^\varepsilon(t)&+Au^\varepsilon(t)dt =f(u^\varepsilon(t))dt +B(u^\varepsilon(t),u^\varepsilon(t))dt + \sqrt{\varepsilon}\sigma(u^\varepsilon(t))dW(t)+dL^\varepsilon(t), t\geq0,\\
 u^\varepsilon(0)& =u_0, \quad u_0\in \overline{D}.
\end{aligned}
\right.
\end{equation}

Large deviations provides asymptotic estimates for probabilities of rare events and have wide applications in fields such as physics, communication engineering, biology, finance and computer science. There are a large number of papers devoted to LDP of SEEs and SPDEs driven by Brownian motions; see \cite{BDV1,C1,CR,C2} and references therein. And one can also refer to \cite{DM} for LDP for Boussinesq equations; \cite{L} for LDP for SEEs with small multiplicative noise and \cite{MSS} for LDP for the stochastic shell model.

An effective approach to obtain the LDP is the weak convergence method introduced by Budhiraja, Dupuis and Maroulas in \cite{BD,BDV1,BDV2}. However, due to the singularity introduced by the reflection, proving the tightness of the perturbations of reflected SEEs required by the creteria in \cite{BD} is challenging. Instead, we will use a sufficient condition of the  Budhiraja–Dupuis–Maroulas criteria proposed in \cite{MSZ}, which shifts the difficulty of proving tightness to a study of the continuity (with respect to the driving signals) of the deterministic skeleton equations associated with the reflected SEEs. Therefore, a crucial part of our work is to study the continuity of the deterministic reflection problems driven by the elements in the Cameron–Martin space of the driving Brownian motions.

The paper is organized as follows. In Section 2, we introduce the precise assumptions of coefficients and the result of existence and uniqueness of solution for (\ref{SEE}). In Section 3, we introduce the definition of LDP and the sufficient condition proposed in \cite{MSZ}. In Section 4, we study the skeleton reflection problems and establish the continuity of the deterministic reflection problems with respect to the driving signal. The proof of the large deviation principle will be presented in Section 5. And in Section 6 we give a example of damped Navier-Stokes Equations in the whole Euclidean domain $\mathbb{R}^2$.

\section{Framework}
\setcounter{equation}{0}
 \setcounter{definition}{0}

Let $A$ be a self-adjoint, positive define operator on the Hilbert space $H$ such that there exists $\lambda_1>0$ satisfying
\begin{equation}
(Au,u)\geq\lambda_1|u|^2, \quad u\in D(A).
\end{equation}

Set $V:=D(A^{\frac{1}{2}})$, the domain of the operator $A^{\frac{1}{2}}$. Then V is a Hilbert space with the inner product
\begin{equation}
((u,v))=(A^{\frac{1}{2}}u,A^{\frac{1}{2}}v), \quad u,v\in V,
\end{equation}
and the norm$\|\cdot\|$. By $V^{\ast}$ we denote the dual space of $V$, so that we have a Gelfand triple
\begin{equation}
V\hookrightarrow H= H^{\ast} \hookrightarrow V^{\ast}.
\end{equation}
We also use $\langle\cdot,\cdot\rangle$ to denote the duality between $V$ and $V^{\ast}$.

The following hypothesis will be in effect throughout this work.\\
\textbf{(A.1)} Consider $f:H\rightarrow V^{\ast}$ and $\sigma:H\rightarrow H$ are two measurable maps such that there exists a constant $C$ satisfying
\begin{equation}\label{A1}
|f(u)-f(v)|_{V^{\ast}} + |\sigma(u)-\sigma(v)| \leq C|u-v|, \quad \text{for all }u,v\in H.
\end{equation}
\textbf{(A.2)} Consider a bilinear map $B:V\times V \rightarrow V^{\ast}$ and the corresponding trilinear form $\overline{b}:V\times V\times V\rightarrow R$ defined by
\begin{equation}
\overline{b}(u,v,w)=\langle B(u,v),w\rangle, \quad u,v,w\in V.
\end{equation}
And the form $\overline{b}$ satisfies the following conditions.\\
a) For all $u,v,w\in V$,
\begin{equation}
\langle B(u,v),w\rangle= \overline{b}(u,v,w)= -\overline{b}(u,w,v)= -\langle B(u,w),v\rangle.
\end{equation}
b) For all $u,v,w\in V$,
\begin{equation}
|\langle B(u,v),w\rangle| = |\overline{b}(u,v,w)|\leq 2 \|u\|^{\frac{1}{2}} |u|^{\frac{1}{2}} \|w\|^{\frac{1}{2}}|w|^{\frac{1}{2}}\|v\|.
\end{equation}
And we can observe that the constant 2 can be replaced by any positive constant.\\
\textbf{(A.3)} $u_0\in \overline{D}$ is deterministic.

Assumption (A.2) particularly implies that
\begin{equation}
\begin{aligned}
&\overline{b}(u,v,v)=0 \quad i.e.\quad\langle B(u,v),v\rangle=0, u,v\in V,\\
&\|B(u,u)\|_{V^{\ast}}\leq 2\|u\||u|, u\in V.
\end{aligned}
\end{equation}
For a given $T>0$, let $X_T=C([0,T];H)\cap L^2([0,T];V)$ be the Banach space endowed with the norm
\begin{equation}\nonumber
|u|^2_{X_T}:=\sup\limits_{t\in[0,T]}|u(t)|^2 + \int_0^T\|u(s)\|^2ds.
\end{equation}
Throughout the paper, $C$ will denote a generic constant whose value may be different from line to line.
\begin{definition}
A pair $(u,L)$ is said to be a solution of Problem $(\ref{SEE})$ iff the following conditions are satisfied\\
(i) $u$ is a $\overline{D}$-valued continuous and $\mathbb{F}$-progressively measurable stochastic process with $u\in L^2([0,T];V)$, for any $T>0$, $\mathbb{P}$-a.s.\\
(ii) the corresponding $V$-valued process is strongly $\mathbb{F}$-progressively measurable;\\
(iii) $L$ is $H$-valued, $\mathbb{F}$-progressively measurable stochastic process of paths of locally bounded  variation such that $L(0)=0$ and
\begin{equation}
\mathbb{E}[|\text{Var}_H(L)([0,T])|^2]<+ \infty,\quad T\geq 0,
\end{equation}
where, for a function $v:[0,\infty)\rightarrow H$,Var$_H(v)([0,T])$ is the total variation of $v$ on $[0,T]$ defined by
\begin{equation}
\text{Var}_H(L)([0,T]):= sup\sum_{i=1}^n|v(t_i)-v(t_{i-1})|,
\end{equation}
where the supremum is taken over all partitions $0 = t_0 < t_1 <\cdots < t_{n-1} < t_n = T$, $n\in \mathbb{N}$, of the interval $[0,T]$;\\
(iv) $(u,L)$ satisfies the following integral identity in $V^{\ast}$, for every $t\geq 0$, $\mathbb{P}$-almost surely,
\begin{equation}\nonumber
u(t)+ \int_0^t Au(s)ds -\int_0^t f(u(s))ds - \int_0^t B(u(s),u(s))ds= u_0 +\int_0^t\sigma(u(s))dW(s) +L(t);\\
\end{equation}
(v) for every $T>0$, and $\phi \in C([0,T],\overline{D})$, $\mathbb{P}$-almost surely,
\begin{equation} \label{def5}
\int_0^T (\phi(t)-u(t),L(dt))\geq 0.
\end{equation}
where the integral on LHS is Riemann-Stieltjes integral of the $H$- valued function $\phi-u$ with respect to an $H$-valued bounded-variation function $L$, see \cite{BP}(p47).
\end{definition}
Let us recall the following result from \cite{BZ}(Theorem 4.1).
\begin{theorem} Let the assumptions \textbf{(A.1)-(A.3)} hold. The reflected stochastic evolution equation(\ref{SEE}) admits a unique solution $(u,L)$ that satisfies, for $T>0$,
\begin{equation}
\mathbb{E}[\sup\limits_{t\in[0,T]}|u(t)|^2 + \int_0^T\|u(s)\|^2ds]<\infty.
\end{equation}
\end{theorem}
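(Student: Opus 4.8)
The plan is to construct the solution by the penalization method, which replaces the singular reflecting term $dL$ by a smooth inward-pointing drift and then passes to the limit. For $\lambda>0$ I would consider the penalized equation
\begin{equation}\nonumber
du^\lambda(t)+Au^\lambda(t)\,dt=f(u^\lambda(t))\,dt+B(u^\lambda(t),u^\lambda(t))\,dt+\sigma(u^\lambda(t))\,dW(t)-\tfrac{1}{\lambda}(|u^\lambda(t)|^2-1)^+u^\lambda(t)\,dt,
\end{equation}
with $u^\lambda(0)=u_0$. The penalty $-\tfrac{1}{\lambda}(|u^\lambda|^2-1)^+u^\lambda$ vanishes inside $\overline{D}$ and pushes the solution toward the ball whenever $|u^\lambda|>1$ (the direction $-u^\lambda$ being the inward normal on $\partial D$), so the candidate reflecting term is $L^\lambda(t):=-\tfrac{1}{\lambda}\int_0^t(|u^\lambda(s)|^2-1)^+u^\lambda(s)\,ds$. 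Since $f$ and $\sigma$ are Lipschitz by \textbf{(A.1)}, $B$ satisfies the antisymmetry and interpolation bounds of \textbf{(A.2)}, and the penalty is monotone, for each fixed $\lambda$ the existence and uniqueness of $u^\lambda$ follows from the variational (locally monotone) theory of SPDEs in the Gelfand triple $V\hookrightarrow H\hookrightarrow V^{\ast}$.

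Next I would derive a priori bounds uniform in $\lambda$. Applying Itô's formula to $|u^\lambda(t)|^2$ and using $\langle B(u^\lambda,u^\lambda),u^\lambda\rangle=\overline{b}(u^\lambda,u^\lambda,u^\lambda)=0$, the identity $\langle Au^\lambda,u^\lambda\rangle=\|u^\lambda\|^2$, the Lipschitz/linear-growth control on $f$ and $\sigma$, and the dissipativity of the penalty contribution $-\tfrac{2}{\lambda}(|u^\lambda|^2-1)^+|u^\lambda|^2\leq 0$, the Burkholder--Davis--Gundy and Gronwall inequalities would yield
\begin{equation}\nonumber
\mathbb{E}\Big[\sup_{t\in[0,T]}|u^\lambda(t)|^2+\int_0^T\|u^\lambda(s)\|^2\,ds\Big]\leq C
\end{equation}
with $C$ independent of $\lambda$. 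The decisive and hardest estimate is a uniform bound on the total variation of $L^\lambda$, i.e. $\mathbb{E}[|\mathrm{Var}_H(L^\lambda)([0,T])|^2]\leq C$; I would obtain it by testing the penalized equation against a process staying strictly inside $\overline{D}$ (for instance a fixed interior point, or $\rho u_0$ with $\rho<1$) and exploiting that $(\phi-u^\lambda,u^\lambda)\leq|u^\lambda|-|u^\lambda|^2<0$ on $\{|u^\lambda|>1\}$, which converts the sign of the penalty into control on its mass.

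With these bounds I would pass to the limit $\lambda\to 0$. The linear terms $Au^\lambda$, $f(u^\lambda)$ and the stochastic integral converge by weak/weak-$\ast$ compactness, but the nonlinear term $B(u^\lambda,u^\lambda)$ requires strong convergence of $u^\lambda$ in $L^2([0,T];H)$; I would secure this via an Aubin--Lions type compactness argument (or, in the probabilistic setting, tightness of laws followed by a Skorokhod representation), using the uniform bound on $\int_0^T\|u^\lambda\|^2\,ds$ together with an estimate on the time regularity of $u^\lambda$ in $V^{\ast}$. The limit $(u,L)$ then satisfies the integral identity (iv); the constraint $u\in\overline{D}$ follows from $\tfrac{1}{\lambda}\mathbb{E}\int_0^T(|u^\lambda|^2-1)^+\,ds\to 0$, and the reflection inequality (v) follows by passing to the limit in
\begin{equation}\nonumber
\int_0^T(\phi(t)-u^\lambda(t),L^\lambda(dt))=-\tfrac{1}{\lambda}\int_0^T(|u^\lambda(t)|^2-1)^+\,(\phi(t)-u^\lambda(t),u^\lambda(t))\,dt\geq 0
\end{equation}
for every $\phi\in C([0,T],\overline{D})$.

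Finally I would prove uniqueness. Given two solutions $(u_1,L_1)$ and $(u_2,L_2)$, I would apply Itô's formula to $|u_1(t)-u_2(t)|^2$. The crucial point is that the reflection terms are dissipative: taking $\phi=u_2$ in (v) for the first solution and $\phi=u_1$ for the second and adding gives $-\int_0^t(u_1-u_2,L_1(ds)-L_2(ds))\leq 0$, so the reflecting contribution has the favorable sign. The difference of the bilinear terms is controlled by the antisymmetry and the interpolation estimate of \textbf{(A.2)} in the standard Navier--Stokes fashion, absorbing part into $\int_0^t\|u_1-u_2\|^2\,ds$ and leaving a term of the form $\int_0^t g(s)|u_1-u_2|^2\,ds$ with $g$ integrable thanks to the a priori bounds; together with the Lipschitz bounds on $f$ and $\sigma$, Gronwall's inequality forces $u_1=u_2$ and hence $L_1=L_2$. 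I expect the two genuinely delicate points to be the uniform variation bound on $L^\lambda$ and the strong convergence required to identify $B$ in the limit.
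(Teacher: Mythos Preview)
The paper does not actually prove this theorem; it is quoted from \cite{BZ} (Theorem 4.1 there). Your penalization strategy is indeed the method of \cite{BZ}, and the paper carries out the deterministic analogue in full in Section~4, so the broad outline is correct. Your uniqueness argument (dissipativity of the reflection via condition (v), control of $B$ by antisymmetry and interpolation, Gronwall) also matches. Two differences are worth flagging.

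First, the penalty: \cite{BZ} and the paper use $-n(u-\pi(u))$, with $\pi$ the radial projection onto $\overline{D}$, rather than your $-\tfrac{1}{\lambda}(|u|^2-1)^+u$. Both point along the inward normal, but the projection version makes the key monotonicity $(x-y,x-\pi(x))\geq 0$ for all $y\in\overline{D}$ (Proposition~\ref{prop}) immediate, and the total-variation bound on the approximate local time drops out of the energy identity for free via $(u,u-\pi(u))=|u-\pi(u)|^2+|u-\pi(u)|$ when $|u|>1$; no separate ``test against an interior point'' is needed.

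Second, and more substantively, for the passage to the limit \cite{BZ} does \emph{not} go through tightness and Skorokhod. Instead it shows directly that the penalized sequence is Cauchy in $L^2(\Omega;X_T)$ by an energy estimate on $u^{n}-u^{m}$ (the deterministic version is Proposition~\ref{prop1} here): the cross penalty terms are controlled using the monotonicity above together with the facts that $n\int_0^T|u^{n}-\pi(u^{n})|\,ds$ stays bounded while $\sup_t|u^{m}-\pi(u^{m})|\to 0$. This yields strong convergence on the original probability space in one step, handles the nonlinear $B$-term at the same time, and avoids the Skorokhod/Yamada--Watanabe detour. Your compactness route would also reach the goal but is heavier and, as you anticipate, makes the identification of $B$ and of the limit of $L^\lambda$ the delicate steps.
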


\section{A sufficient Condition for LDP}
\setcounter{equation}{0}
 \setcounter{definition}{0}
In this section we recall the sufficient condition proposed in \cite{MSZ} for the criteria obtained in \cite{BD}
for proving a large deviation principle.
Let $\mathcal{E}$ be a Polish space with the Borel $\sigma$-field $\mathcal{B}(\mathcal{E})$. Recall
\begin{definition}(Rate function) A function $I:\mathcal{E}\rightarrow[0,\infty]$ is called a rate function on $\mathcal{E}$, if for each $M<0$, the level set $\{x\in\mathcal{E}:I(x)\leq M\}$ is a compact subset of $\mathcal{E}$.
\end{definition}
\begin{definition}(Large deviation principle) Let $I$ be a rate function on $\mathcal{E}$. A family $\{X^{\epsilon}\}$ of $\mathcal{E}$-valued random elements is said to satisfy a large deviation principle on $\mathcal{E}$ with rate function $I$ if the following two claims hold.\\
(a) (Upper bound) For each closed subset $F$ of $\mathcal{E}$,
\begin{equation}\nonumber
\limsup\limits_{\epsilon\rightarrow 0}\epsilon \textup{log} \mathbb{P}(X^{\epsilon}\in F)\leq - \inf\limits_{x\in F} I(x).
\end{equation}
(b) (Lower bound) For each open subset $G$ of $\mathcal{E}$,
\begin{equation}\nonumber
\liminf\limits_{\epsilon \rightarrow 0}\epsilon \textup{log}\mathbb{P}(X^{\epsilon}\in G)\geq - \inf\limits_{x\in F} I(x).
\end{equation}
\end{definition}
 The Cameron-Martin space associated with the Browian motion $\{W=W(t),t\in [0,T]\}$ is isomorphic to the Hilbert space $K:=L^2([0,T];\mathbb{R})$ with the inner product:
\begin{equation}
\langle h_1,h_2\rangle  _K:=\int_0^T h_1(s)h_2(s) ds.
\end{equation}
Denote by $\|\cdot\|_K$ the norm of K.
Let $\widetilde{K}$ denote the class of real-valued $\{\mathcal{F}_t\}$-predictable processes $\phi$ that belong to the space $K$ a.s.. Let $S_N=\{k\in K;\int_0^T k^2(s)ds\leq N\}.$ The set $S_N$ endowed with the weak topology is a compact Polish space. Set $\widetilde{S}_N=\{\phi\in \widetilde{K}; \phi(\omega)\in S_N,\mathbb{P}$-$a.s.\}.$\\
Recall the following sufficient condition from \cite{MSZ}.

\begin{theorem} \label{cri} For $\varepsilon>0$, let $\Gamma^{\varepsilon}$ be a measurable mapping from $C([0,T],\mathbb{R})$ into $\mathcal{E}$. Set $X^{\varepsilon}:= \Gamma^{\varepsilon}(W(\cdot))$. Suppose that there exists a measurable map $\Gamma^0:C([0,T],\mathbb{R})\rightarrow \mathcal{E}$ such that\\
(i) for every $N<+\infty$, any family $\{k^{\varepsilon};\varepsilon >0\}\subset \widetilde{S}_N$ and  any $\delta>0$
\begin{equation}
\lim\limits_{\varepsilon \rightarrow 0} \mathbb{P}(\rho(Y^{\varepsilon},Z^{\varepsilon})>\delta)=0,
\end{equation}
where $Y^{\varepsilon}=\Gamma^{\varepsilon}(W(\cdot)+\frac{1}{\sqrt{\varepsilon}}\int _0^\cdot k^{\varepsilon}(s)ds), Z^{\varepsilon}=\Gamma^0(\int _0^\cdot k^{\varepsilon}(s)ds)$ and $\rho(\cdot,\cdot)$ stands for the metric in the space $\mathcal{E}$.\\
(ii) for every $N<+\infty$ and any family $\{k^{\varepsilon};\varepsilon >0\}\subset S_N$ satisfying that $k^{\varepsilon}$ converges weakly to some element $k$ in $K$ as $\varepsilon \rightarrow 0,$ $ \Gamma^0(\int _0^\cdot k^{\varepsilon}(s)ds)$ converges to $\Gamma^0(\int _0^\cdot k(s)ds)$ in the space $\mathcal{E}$.\\
Then the family $\{X^{\varepsilon}\}_{\varepsilon >0}$ satisfies a large deviation principle in $\mathcal{E}$ with the rate function $I$ given by
\begin{equation}
I(g):=\inf\limits_{\{k\in K, g=\Gamma^0(\int _0^\cdot k(s)ds)\} } \left\{\frac{1}{2}\int _0^T k^2(s)ds\right\},g\in \mathcal{E},
\end{equation}
with the convention $\inf\{\emptyset\}=\infty$.
\end{theorem}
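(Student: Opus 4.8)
The natural route is the weak convergence method: I would deduce the large deviation principle from the equivalent Laplace principle and exploit the Bou\'e--Dupuis variational representation for functionals of the Brownian motion $W$. Since $\mathcal{E}$ is Polish and (as I check at the end) $I$ is a rate function in the sense of Definition 3.1, it suffices to prove that for every bounded continuous $h:\mathcal{E}\to\mathbb{R}$,
\begin{equation}\nonumber
\lim_{\varepsilon\to 0}\Big(-\varepsilon\log\mathbb{E}\big[e^{-h(X^{\varepsilon})/\varepsilon}\big]\Big)=\inf_{g\in\mathcal{E}}\{h(g)+I(g)\}.
\end{equation}
Applying the representation to $\tfrac1\varepsilon\,h\circ\Gamma^{\varepsilon}$, multiplying through by $\varepsilon$, and rescaling the control by $v=\tfrac1{\sqrt\varepsilon}k$ rewrites the quantity on the left as
\begin{equation}\nonumber
-\varepsilon\log\mathbb{E}\big[e^{-h(X^{\varepsilon})/\varepsilon}\big]=\inf_{k\in\widetilde{K}}\mathbb{E}\Big[h\Big(\Gamma^{\varepsilon}\big(W+\tfrac1{\sqrt\varepsilon}\int_0^\cdot k(s)\,ds\big)\Big)+\tfrac12\int_0^T k^2(s)\,ds\Big],
\end{equation}
so that the processes $Y^{\varepsilon}$ and $Z^{\varepsilon}$ of the statement arise naturally once a (possibly random) control $k^{\varepsilon}$ is chosen.

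For the upper bound $\limsup\le\inf_g\{h(g)+I(g)\}$, I would fix $g_0\in\mathcal{E}$ and $\eta>0$, pick a \emph{deterministic} $k\in K$ with $g_0=\Gamma^0(\int_0^\cdot k\,ds)$ and $\tfrac12\|k\|_K^2\le I(g_0)+\eta$, and use this $k$ as a constant admissible control, which bounds the left-hand side above by $\mathbb{E}[h(Y^{\varepsilon})+\tfrac12\|k\|_K^2]$. Condition (i) gives $Y^{\varepsilon}\to Z^{\varepsilon}=g_0$ in probability, and since $h$ is bounded and continuous, $\mathbb{E}[h(Y^{\varepsilon})]\to h(g_0)$; hence $\limsup\le h(g_0)+I(g_0)+\eta$, and optimizing over $g_0$ and letting $\eta\to0$ closes this half.

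For the lower bound $\liminf\ge\inf_g\{h(g)+I(g)\}$, I would choose near-optimal controls $k^{\varepsilon}$ in the representation; boundedness of $h$ keeps $\tfrac12\mathbb{E}\|k^{\varepsilon}\|_K^2$ bounded, so after a standard truncation one may assume $k^{\varepsilon}\in\widetilde{S}_N$ for a fixed $N$. Since $(S_N,\text{weak})$ is a compact Polish space, along a subsequence the laws of $k^{\varepsilon}$ converge and, by Skorokhod's representation, I may assume $k^{\varepsilon}\to k$ weakly almost surely. Condition (ii) then yields $Z^{\varepsilon}=\Gamma^0(\int_0^\cdot k^{\varepsilon}\,ds)\to\Gamma^0(\int_0^\cdot k\,ds)=:g^{*}$, while condition (i) lets me replace $Y^{\varepsilon}$ by $Z^{\varepsilon}$, so that $Y^{\varepsilon}\to g^{*}$. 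Weak lower semicontinuity of $\|\cdot\|_K$ gives $\liminf\tfrac12\|k^{\varepsilon}\|_K^2\ge\tfrac12\|k\|_K^2\ge I(g^{*})$, and $\mathbb{E}[h(Y^{\varepsilon})]\to h(g^{*})$; hence $\liminf\ge h(g^{*})+I(g^{*})\ge\inf_g\{h(g)+I(g)\}$ up to the chosen $\eta$.

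Finally, that $I$ is a rate function follows from (ii): writing $\mathcal{G}(k):=\Gamma^0(\int_0^\cdot k\,ds)$, a weak-compactness-plus-continuity argument identifies $\{g:I(g)\le M\}$ with $\mathcal{G}(S_{2M})$, the continuous image of a weakly compact set, which is therefore compact. The main obstacle is the lower bound: the controls $k^{\varepsilon}$ are genuinely random, so the passage to the weak limit must be carried out with care, relying on the compactness of the weak topology on $S_N$ together with the uniform bounds coming from the boundedness of $h$. It is precisely here that the decoupling afforded by the criterion of \cite{MSZ}—(i) a convergence in probability of $Y^{\varepsilon}$ to $Z^{\varepsilon}$, and (ii) a purely deterministic continuity of the skeleton map—does the real work, sparing the harder joint tightness of the controlled system demanded by the original Budhiraja--Dupuis formulation.
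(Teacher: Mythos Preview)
The paper does not give its own proof of this theorem: it is stated in Section~3 with the sentence ``Recall the following sufficient condition from \cite{MSZ}'' and is used as a black-box criterion. So there is no in-paper proof to compare against; your proposal is effectively a sketch of what the cited reference does.

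Your outline follows the standard weak convergence route (Laplace principle via the Bou\'e--Dupuis variational representation) and is, in broad strokes, the right argument. Two places deserve more care. First, in the lower bound you invoke Skorokhod on $k^{\varepsilon}$ alone and then apply condition~(i); but $Y^{\varepsilon}$ depends on $W$ as well, so after changing probability space you must keep track of the joint law of $(W,k^{\varepsilon})$, or---cleaner---argue directly in distribution: tightness of $(k^{\varepsilon},Z^{\varepsilon})$ on $S_N\times\mathcal{E}$ follows from compactness of $S_N$ and continuity of $\mathcal{G}$ (condition~(ii)), and then condition~(i), which holds for \emph{any} family in $\widetilde{S}_N$, upgrades this to convergence in distribution of $(k^{\varepsilon},Y^{\varepsilon})$. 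Second, the passage ``$\mathbb{E}[h(Y^{\varepsilon})]\to h(g^{*})$'' is not quite right as stated, since the limit control $k$ is random; what one actually gets is $\liminf\mathbb{E}[h(Y^{\varepsilon})+\tfrac12\|k^{\varepsilon}\|_K^2]\ge\mathbb{E}[h(\mathcal{G}(k))+\tfrac12\|k\|_K^2]\ge\inf_g\{h(g)+I(g)\}$ via Fatou and weak lower semicontinuity of the norm. With these adjustments the argument goes through, and it is essentially how \cite{MSZ} derives the criterion from the original Budhiraja--Dupuis conditions in \cite{BD}.
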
          

\section{Skeleton Equations}
\setcounter{equation}{0}
 \setcounter{definition}{0}
For $k\in K:=L^2([0,T];\mathbb{R})$, consider the deterministic reflection problem:
\begin{equation}\label{deter} 
\left\{
\begin{aligned}
du^k(t)&+Au^k(t)dt =f(u^k(t))dt +B(u^k(t),u^k(t))dt + \sigma(u^k(t))k(t)dt+dL^k(t), t\geq0,\\
 u^k(0)& =u_0, \quad u_0\in \overline{D}.
\end{aligned}
\right.
\end{equation}
The existence and uniqueness of the solution of the deterministic reflection problem (\ref{deter}) can be obtained in a same way  as the stochastic reflection problem (\ref{SEE}). Denote by $u^{k^\varepsilon}$ the solution of equation(\ref{deter}) with $k^\varepsilon$ replacing $k$. The main purpose of this section is to show that $u^{k^\varepsilon}$ converges to $u^k$ in the space $X_T$ if $k^\varepsilon\rightarrow k$ weakly in the Hilbert space $K$. To this end, we first need to establish a number of preliminary results.
Consider the associated penalized equation:
\begin{equation}\label{penal} 
\left\{
\begin{aligned}
du^{k,n}(t)+Au^{k,n}(t)dt=&f(u^{k,n}(t))dt +B(u^{k,n}(t),u^{k,n}(t))dt \\
                         & \quad + \sigma(u^{k,n}(t))k(t)dt-n(u^{k,n}(t)-\pi(u^{k,n}(t)))dt, t\geq0,\\
  u^{k,n}(0) =u_0, \quad u_0\in \overline{D},
\end{aligned}
\right.
\end{equation}
where $\pi$ is the projection onto $\overline{D}$, defined by, for $y\in H$, by
\begin{equation}
\pi(y)=\begin{cases} y, \quad  &\text{if }|y|\leq 1,\\
              \frac{y}{|y|}, \quad &\text{if } |y|>1.
\end{cases}
\end{equation}
As in the case of reflected SEEs in \cite{BZ}(but simpler), one can show that $u^{k,n}\rightarrow u^k$ as $n\rightarrow \infty$ for a fixed $k \in K$. We need to show that  for any $N>0, u^{k,n}\rightarrow u^k$ uniformly over the bounded subset $S_N$ as $n\rightarrow \infty$.
First, similarly as in \cite{BZ} we have the following estimates.
\begin{proposition}\label{prop}Let the assumptions \textbf{(A.1)-(A.3)} hold.\\
1. For $x\in H$ and $y \in \overline{D},$
\begin{equation}\label{1} 
(x-y,x-\pi(x))\geq 0.
\end{equation}
and the map $\pi$ is globally Lipschitz in the sense that
\begin{equation}\label{4}  
|\pi(x)-\pi(y)|\leq 2 |x-y|,\quad x,y \in H.
\end{equation}
2. For fixed $k\in K$ and $T>0$ there exists a constant $C_{T}$ such that
\begin{equation}\label{6} 
\sup\limits_n\sup \limits_{t\in [0,T]}|u^{k,n}(t)|^4\leq C_{T}.
\end{equation}
3. For $N>0$ and $T>0$,  there exists a constant $C_{N,T}$ such that
\begin{equation}\label{2} 
\sup\limits_{k\in S_N}\sup \limits_n \left(n\int_0^T|u^{k,n}(s)-\pi(u^{k,n}(s))|ds\right)\leq C_{N,T},
\end{equation}
\begin{equation} \label{3} 
\lim \limits_{n\rightarrow \infty} \sup\limits_{k\in S_N}\sup \limits_{t\in [0,T]}|u^{k,n}(s)-\pi(u^{k,n}(s))|^4=0,
\end{equation}
and
\begin{equation}\label{5} 
\sup\limits_{k\in S_N}\sup \limits_n \left(n\int_0^T|u^{k,n}(s)-\pi(u^{k,n}(s))|^2ds \right)\leq C_{N,T}.
\end{equation}

\end{proposition}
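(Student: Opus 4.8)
The plan is to derive all five estimates from energy-type identities obtained by testing the penalized equation \eref{penal} against suitable functions, always exploiting the antisymmetry $\overline{b}(u,v,v)=0$ to kill the nonlinear term and the coercivity $(Au,u)\geq\lambda_1|u|^2$ to control the linear part. I would treat the statements in the order (1), (2), (6), (5), (3), since the later ones build on the uniform bounds established earlier.

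First I would prove the elementary geometric facts in part (1). For \eref{1}, I would distinguish the two cases in the definition of $\pi$: when $|x|\leq 1$ we have $x-\pi(x)=0$ and the inequality is trivial; when $|x|>1$ one writes $x-\pi(x)=(1-\tfrac{1}{|x|})x$ with $1-\tfrac1{|x|}>0$, so that $(x-y,x-\pi(x))=(1-\tfrac1{|x|})(x-y,x)$, and since $|y|\leq 1$ and $|x|>1$ the Cauchy--Schwarz estimate $(x,y)\leq|x||y|\leq|x|$ gives $(x-y,x)\geq|x|^2-|x|>0$. The Lipschitz bound \eref{4} with constant $2$ is a standard property of the radial projection onto the closed unit ball; I would record it directly (the factor $2$ is generous and any constant works).

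Next, for the uniform bound \eref{6}, I would test \eref{penal} against $u^{k,n}(t)$ to obtain a differential inequality for $|u^{k,n}(t)|^2$. The nonlinear term $\langle B(u^{k,n},u^{k,n}),u^{k,n}\rangle$ vanishes by (A.2)a, the term $A$ contributes $-2\|u^{k,n}\|^2\leq-2\lambda_1|u^{k,n}|^2$, the penalization term $-2n(u^{k,n}-\pi(u^{k,n}),u^{k,n})$ is nonpositive by \eref{1} applied with $y=0\in\overline D$, and the forcing terms $f$, $\sigma k$ are controlled using (A.1) and Young's inequality, with $\int_0^T|k(s)|\,ds\leq\sqrt T\,\|k\|_K$ bounding the $\sigma k$ contribution. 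Gr\"onwall's lemma then yields a bound on $\sup_t|u^{k,n}(t)|^2$ depending only on $T$ and $\|k\|_K$, hence on $T$ for fixed $k$; squaring (or running the same argument for the fourth power) gives \eref{6}. Crucially, when $k$ ranges over $S_N$ the constant depends only on $N$ and $T$, which is exactly what the later estimates need.

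The main obstacle, and the technical heart of the proposition, is quantifying the penalization terms \eref{2}, \eref{5} and the decay \eref{3} \emph{uniformly} over $k\in S_N$. For \eref{2} and \eref{5} I would again use the energy identity, but now reorganize it to isolate the penalization term: integrating the inequality for $|u^{k,n}(t)|^2$ over $[0,T]$ and moving the nonpositive penalization contribution to the left-hand side turns $n\int_0^T(u^{k,n}-\pi(u^{k,n}),u^{k,n})\,ds$ into something bounded by the already-controlled energy, the $A$-term, and the forcing. Since $(u^{k,n}-\pi(u^{k,n}),u^{k,n})\geq|u^{k,n}-\pi(u^{k,n})|\cdot(|u^{k,n}|-1)_+$ and one checks $(u-\pi(u),u)=|u-\pi(u)||u|\geq|u-\pi(u)|$ whenever $|u|>1$ (using $u-\pi(u)=(1-1/|u|)u$), this directly bounds $n\int_0^T|u^{k,n}-\pi(u^{k,n})|\,ds$, giving \eref{2}, while a parallel computation pairing against $n(u^{k,n}-\pi(u^{k,n}))$ itself yields the squared version \eref{5}. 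The uniform decay \eref{3} is the most delicate: one expects $|u^{k,n}-\pi(u^{k,n})|\sim O(1/n)$, so I would derive a differential inequality for $|u^{k,n}-\pi(u^{k,n})|^2$ (or bound $\sup_t$ of this quantity) in which the penalization provides a restoring term of strength $n$, forcing the excess outside $\overline D$ to zero as $n\to\infty$ at a rate independent of $k\in S_N$; the uniformity is guaranteed precisely because every constant entering the estimate depends on $k$ only through $\|k\|_K^2\leq N$. The care needed to make the $k$-dependence enter only through the $S_N$-bound, rather than through $k$ itself, is what makes this uniform version stronger than the fixed-$k$ estimates in \cite{BZ}.
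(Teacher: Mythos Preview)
Your proposal is correct and follows essentially the same route as the paper, which largely defers to the corresponding lemmas in \cite{BZ} and only sketches the key energy inequality. One small efficiency gain worth noting: rather than running a separate ``parallel computation pairing against $n(u^{k,n}-\pi(u^{k,n}))$'' for \eref{5}, the single energy identity obtained by testing \eref{penal} against $u^{k,n}$ already delivers \eref{2} and \eref{5} simultaneously, via the exact identity $(u-\pi(u),u)=|u-\pi(u)|^{2}+|u-\pi(u)|$ (check directly when $|u|>1$; both sides vanish otherwise). This is how the paper writes the combined inequality, and it sidesteps any need to differentiate through $\pi$, which your alternative pairing would require.
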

\begin{proof}
One can refer to Lemma 3.2 in \cite{BZ} for the proof of the statement $1$. (\ref{6}) and (\ref{3}) can be obtained following the similar arguments as in the proof of \cite{BZ} Lemma 3.4 and Lemma 3.8 in \cite{BZ}.  Using the chain rule, as in the proof of Lemma 3.6  we have
\begin{equation}\nonumber
\begin{aligned}
\sup \limits_{n}&\sup \limits_{t\in [0,T]}|u^{k,n}(t)|^2  + \sup \limits_{n}\int_0^T\|u^{k,n}(s)\|^2ds\\
&\hspace{4cm} +\sup \limits_{n}\left(2n\int_0^T|u^{k,n}(s)-\pi(u^{k,n}(s))|^2ds + 2 n\int_0^T|u^{k,n}(s)-\pi(u^{k,n}(s))|ds\right)\\
&\leq C + C\int_0^T|k(s)||u^{k,n}(s)|^2ds.
\end{aligned}
\end{equation}
Then it follows from Gronwall inequality that there exists a constant $C_{N,T}$ such that
\begin{equation}\nonumber
\begin{aligned}
&\sup\limits_{k\in S_N}\sup \limits_{n}\int_0^T\|u^{k,n}(s)\|^2ds \leq C_{N,T},\\
&\sup\limits_{k\in S_N}\sup \limits_n \left(n\int_0^T|u^{k,n}(s)-\pi(u^{k,n}(s))|ds\right)\leq C_{N,T},\\
&\sup\limits_{k\in S_N}\sup \limits_n \left(n\int_0^T|u^{k,n}(s)-\pi(u^{k,n}(s))|^2ds \right)\leq C_{N,T}.
\end{aligned}
\end{equation}
The proof is complete.
\end{proof}

\begin{theorem}\label{4.1} Let assumptions \textbf{(A.1)-(A.3)} hold. Assume that $\{k^\varepsilon;\varepsilon>0\}\subset S_N$ satisfying that $k^\varepsilon\rightarrow k$ weakly in the Hilbert space $K$ as $\varepsilon\rightarrow 0$. Then $u^{k^\varepsilon}$ converges to $u^k$ in the space $X_T$, where $u^{k^\varepsilon}$ denotes the solution of equation $(\ref{deter})$ with $k^\varepsilon$ replacing $k$.
\end{theorem}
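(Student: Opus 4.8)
The plan is to prove convergence in two stages: first establish the uniform-in-$n$ convergence of the penalized solutions $u^{k^\varepsilon,n}$ to $u^{k^\varepsilon}$ over the bounded set $S_N$ (this is the stated goal just preceding the theorem), and then use a weak-convergence argument at the penalized level. The key point is the diagram
\[
u^{k^\varepsilon,n} \;\longrightarrow\; u^{k,n} \quad(\varepsilon\to0,\ n\text{ fixed}),
\qquad
u^{k,n} \;\longrightarrow\; u^{k}\quad(n\to\infty),
\]
together with the uniform estimate $\sup_{\varepsilon}|u^{k^\varepsilon,n}-u^{k^\varepsilon}|_{X_T}\to0$ as $n\to\infty$, which lets me interchange the limits and conclude $u^{k^\varepsilon}\to u^k$ in $X_T$.

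First I would treat the fixed-$n$ convergence $u^{k^\varepsilon,n}\to u^{k,n}$ as $\varepsilon\to0$. Set $w^\varepsilon:=u^{k^\varepsilon,n}-u^{k,n}$ and apply the chain rule (energy identity) to $|w^\varepsilon(t)|^2$. The terms from $f$, $\sigma$, the penalization $-n(x-\pi(x))$, and the coercivity of $A$ are handled by the Lipschitz bounds in \textbf{(A.1)}, the global Lipschitz property \eqref{4} of $\pi$, and Gronwall; the bilinear term is controlled using the antisymmetry $\overline b(u,v,v)=0$ from \textbf{(A.2)} and the interpolation bound in \textbf{(A.2)}b together with the uniform bound \eqref{6}. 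The genuinely new term is the difference of driving signals,
\[
\int_0^t\bigl(\sigma(u^{k,n}(s))\,(k^\varepsilon(s)-k(s)),\,w^\varepsilon(s)\bigr)\,ds,
\]
which does not go to zero pointwise but only \emph{weakly}; here I would exploit that $k^\varepsilon\rightharpoonup k$ weakly in $K$, the fact that $s\mapsto\sigma(u^{k,n}(s))$ lies in $L^2([0,T];H)$ with a norm bounded uniformly in $\varepsilon$ via \eqref{6}, and a compactness/Arzel\`a--Ascoli argument to pass to the limit. This weak-convergence passage is where I expect the main obstacle to lie, because one cannot simply bound the signal difference in norm; it has to be integrated against a term that is itself being controlled, so the argument must be arranged so that the weak limit is taken against a strongly convergent (or precompact) test function.

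Next I would establish the uniform-in-$\varepsilon$ penalization convergence $\sup_{k\in S_N}|u^{k,n}-u^{k}|_{X_T}\to0$ as $n\to\infty$. This is the analogue of the convergence of penalized to reflected solutions from \cite{BZ}, but must now be made uniform over $S_N$. The crucial inputs are precisely the uniform estimates in Proposition \ref{prop}: the bound \eqref{2} on $n\int_0^T|u^{k,n}-\pi(u^{k,n})|\,ds$, the vanishing \eqref{3} of $\sup_t|u^{k,n}-\pi(u^{k,n})|^4$, and the $L^2$-version \eqref{5}, all of which hold uniformly over $k\in S_N$. Using the positivity inequality \eqref{1} (with $y\in\overline D$ and $x=u^{k,n}$) in the energy estimate for $|u^{k,n}-u^k|^2$, the penalization terms produce the right sign, and the cited uniform bounds force the error to $0$ at a rate independent of $k$.

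Finally, I would combine the two stages. Given $\delta>0$, choose $n$ large (using the uniform penalization convergence) so that $|u^{k^\varepsilon,n}-u^{k^\varepsilon}|_{X_T}$ and $|u^{k,n}-u^k|_{X_T}$ are both below $\delta/3$ for all $\varepsilon$; then for that fixed $n$ let $\varepsilon\to0$ to make $|u^{k^\varepsilon,n}-u^{k,n}|_{X_T}<\delta/3$ by the first stage. The triangle inequality
\[
|u^{k^\varepsilon}-u^k|_{X_T}\le |u^{k^\varepsilon}-u^{k^\varepsilon,n}|_{X_T}+|u^{k^\varepsilon,n}-u^{k,n}|_{X_T}+|u^{k,n}-u^k|_{X_T}
\]
then gives the claim. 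The role of \textbf{(A.2)} is essential throughout to keep the nonlinear term from destroying the energy estimates, and the weak-convergence step of the first stage remains the delicate part of the whole argument.
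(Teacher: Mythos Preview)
Your overall plan matches the paper's proof exactly: the same three-term triangle inequality, Proposition \ref{prop1} for the uniform-in-$k$ penalization limit, and Proposition \ref{prop2} for the fixed-$n$ weak-convergence step, with the signal-difference term correctly flagged as the delicate point.

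Two implementation details differ from the paper and are worth noting. For the uniform penalization limit you propose to compare $u^{k,n}$ directly with the reflected solution $u^k$; this runs into the difficulty that the reflection term $-2\int_0^t(u^{k,n}-u^k,\,dL^k)$ cannot be signed via \eqref{def5} because $u^{k,n}\notin\overline D$ in general, and splitting off $u^{k,n}-\pi(u^{k,n})$ would require a uniform-in-$k$ bound on $\text{Var}_H(L^k)$ that is not among the estimates in Proposition \ref{prop}. The paper sidesteps this by proving instead that $\{u^{k,n}\}_n$ is Cauchy in $X_T$ uniformly over $S_N$, comparing two penalized solutions $u^{k,n}$ and $u^{k,m}$ so that only the explicit penalty terms appear and \eqref{1}, \eqref{2}, \eqref{3} suffice. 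For the fixed-$n$ step, your ``compactness/Arzel\`a--Ascoli'' remark is on target but vague: the paper obtains precompactness of $\{u^{k^\varepsilon,n}\}_\varepsilon$ in $L^2(0,T;H)$ via Lemma \ref{lem}, after checking a $W^{\alpha,2}(0,T;V^*)$ bound from the equation, and then splits the troublesome integral using the strong $L^2(0,T;H)$ limit $\tilde u$ of a subsequence. Both stages also use the exponential weight $f_n(t)=\exp(-\lambda\int_0^t\|u^{k,n}\|^2\,ds)$ to absorb the $\|u^{k,n}\|^2|w|^2$ term produced by the bilinear estimate; your plain Gronwall would work equivalently since $\int_0^T\|u^{k,n}\|^2\,ds$ is bounded uniformly, but the paper's weighting makes the bookkeeping cleaner.
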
 
\begin{proof}
For any $n\geq 1$, we have
\begin{equation}\label{10}
\begin{aligned}
&|u^{k^\varepsilon}-u^k|_{X_T}\\
&\leq|u^{k^\varepsilon}-u^{k^\varepsilon,n}|_{X_T} +|u^{k^\varepsilon,n}-u^{k,n}|_{X_T}+|u^{k,n}-u^{k}|_{X_T}.
\end{aligned}
\end{equation}
The proof will be complete if we prove

(i) for every $n\geq 1$,
\begin{equation}\label{10-1}
\lim\limits_{\varepsilon\rightarrow 0}|u^{k^\varepsilon,n}-u^{k,n}|_{X_T}=0,
\end{equation}
and

(ii)
\begin{equation}\label{10-2}
\lim\limits_{n\rightarrow \infty}\sup\limits_\varepsilon|u^{k^\varepsilon}-u^{k^\varepsilon,n}|_{X_T}=0.
\end{equation}

These two claims (\ref{10-1}) and (\ref{10-2}) will be shown in Proposition \ref{prop2} and Proposition \ref{prop1} below.

\end{proof}

\begin{proposition} \label{prop1}With the assumptions in Theorem \ref{4.1} hold. For any $N>0$, we have
\begin{equation}
\lim\limits_{n\rightarrow\infty}\sup\limits_{k\in S_N}|u^{k,n}-u^k|_{X^T}=0.
\end{equation}
\end{proposition}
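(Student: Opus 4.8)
The plan is to control the difference $w^{k,n}:=u^{k,n}-u^k$ in the $X_T$-norm by an energy estimate, discarding the penalization term by sign and taming the reflection term through the variational inequality (\ref{def5}) together with the estimates collected in Proposition \ref{prop}. First I would subtract (\ref{deter}) from (\ref{penal}), obtaining an identity for $w^{k,n}$ in $V^\ast$ whose right-hand side consists of the differences of the drift terms, the penalization term $-n(u^{k,n}-\pi(u^{k,n}))$, and minus the bounded-variation term $L^k$. Since $w^{k,n}$ is continuous and the only singular contribution enters through $L^k$, the chain rule for the Gelfand triple (justified as in \cite{BZ}) gives
\begin{equation}\nonumber
\begin{aligned}
\frac12|w^{k,n}(t)|^2 + \int_0^t\|w^{k,n}(s)\|^2\,ds
&= \int_0^t \langle f(u^{k,n})-f(u^k),w^{k,n}\rangle\,ds
+ \int_0^t \langle B(u^{k,n},u^{k,n})-B(u^k,u^k),w^{k,n}\rangle\,ds\\
&\quad + \int_0^t ([\sigma(u^{k,n})-\sigma(u^k)]k,w^{k,n})\,ds
- n\int_0^t (u^{k,n}-\pi(u^{k,n}),w^{k,n})\,ds\\
&\quad - \int_0^t (w^{k,n},L^k(ds)).
\end{aligned}
\end{equation}

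Second, I would dispose of the drift terms. The $f$- and $\sigma$-terms are handled with the Lipschitz bound (\ref{A1}) and Young's inequality, producing a small absorbable multiple of $\|w^{k,n}\|^2$ plus a contribution of the form $C(1+|k(s)|)|w^{k,n}(s)|^2$. For the $B$-term I would write $B(u^{k,n},u^{k,n})-B(u^k,u^k)=B(w^{k,n},u^{k,n})+B(u^k,w^{k,n})$; the antisymmetry in \textbf{(A.2)} kills $\langle B(u^k,w^{k,n}),w^{k,n}\rangle$, while the remaining term, estimated through \textbf{(A.2)}b together with the uniform bound $\sup_{k\in S_N}\sup_n\sup_{t\le T}|u^{k,n}(t)|\le C_{N,T}$ (from the Gronwall argument in the proof of Proposition \ref{prop}) and one more application of Young's inequality, again contributes an absorbable $\|w^{k,n}\|^2$ plus a term $C\|u^{k,n}(s)\|^2|w^{k,n}(s)|^2$.

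Third, and this is the crux, I would treat the penalization and reflection terms. The penalization term is nonpositive: because $u^k(s)\in\overline D$, inequality (\ref{1}) gives $(u^{k,n}-\pi(u^{k,n}),u^{k,n}-u^k)\ge 0$, so $-n\int_0^t(u^{k,n}-\pi(u^{k,n}),w^{k,n})\,ds\le 0$ and may simply be dropped. For the local-time term I would split
\begin{equation}\nonumber
-\int_0^t (w^{k,n},L^k(ds)) = -\int_0^t (u^{k,n}-\pi(u^{k,n}),L^k(ds)) - \int_0^t (\pi(u^{k,n})-u^k,L^k(ds)).
\end{equation}
Since $\pi(u^{k,n})$ is continuous by (\ref{4}) and $\overline D$-valued, the variational inequality (\ref{def5}) applied with $\phi=\pi(u^{k,n})$ shows the last integral is nonnegative, so its negative is $\le 0$. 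The first integral is bounded by $\big(\sup_{s\le T}|u^{k,n}(s)-\pi(u^{k,n}(s))|\big)\,\mathrm{Var}_H(L^k)([0,T])$, where (\ref{3}) forces the supremum to $0$ uniformly over $k\in S_N$, and $\mathrm{Var}_H(L^k)([0,T])\le \liminf_n n\int_0^T|u^{k,n}-\pi(u^{k,n})|\,ds\le C_{N,T}$ uniformly over $S_N$, by (\ref{2}) and lower semicontinuity of the variation under $u^{k,n}\to u^k$. Hence this term is dominated by a quantity $\eta_n$ with $\sup_{k\in S_N}\eta_n\to 0$.

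Finally, collecting everything gives
\begin{equation}\nonumber
\frac12|w^{k,n}(t)|^2 + \frac34\int_0^t\|w^{k,n}(s)\|^2\,ds \le \int_0^t g_k(s)\,|w^{k,n}(s)|^2\,ds + \eta_n,
\end{equation}
with $g_k(s)=C(1+|k(s)|+\|u^{k,n}(s)\|^2)$ satisfying $\int_0^T g_k\,ds\le C_{N,T}$ uniformly over $k\in S_N$ and $n$, using $\int_0^T|k|\,ds\le\sqrt{TN}$ on $S_N$ and the uniform bound $\sup_{k\in S_N}\sup_n\int_0^T\|u^{k,n}\|^2\,ds\le C_{N,T}$ from Proposition \ref{prop}. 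Gronwall's lemma then yields $\sup_{k\in S_N}|u^{k,n}-u^k|^2_{X_T}\le C_{N,T}\,\eta_n\to 0$, which is the claim. I expect the main difficulty to lie in the third step: rigorously justifying that $\pi(u^{k,n})$ is an admissible test function in (\ref{def5}) and establishing the uniform-over-$S_N$ bound on $\mathrm{Var}_H(L^k)$, since the reflection term $L^k$ is only of bounded variation and must be controlled through the penalization estimates (\ref{2}) and (\ref{3}).
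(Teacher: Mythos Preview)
Your proposal is correct, but it follows a genuinely different route from the paper. The paper never compares $u^{k,n}$ with the limit $u^k$ directly; instead it shows that $\{u^{k,n}\}_n$ is \emph{Cauchy} in $X_T$ uniformly over $k\in S_N$, by estimating $|u^{k,n}-u^{k,m}|$ for two penalized solutions. Because both $u^{k,n}$ and $u^{k,m}$ solve the penalized equation (\ref{penal}), no bounded-variation term $L^k$ ever appears; the two penalization contributions are paired and controlled through (\ref{1}), (\ref{2}), (\ref{3}). The quadratic $B$-term is absorbed by an exponential weight $f_n(t)=\exp(-\lambda\int_0^t\|u^{k,n}\|^2)$ with $\lambda>4$, after which a Gronwall step in the variable $f_n(t)|u^{k,n}(t)-u^{k,m}(t)|^2$ closes the estimate.

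Your approach, by contrast, subtracts (\ref{deter}) from (\ref{penal}) and therefore must handle $L^k$ explicitly. You drop the penalization by sign (using (\ref{1}) and $u^k\in\overline D$), split the $L^k$-integral, invoke (\ref{def5}) with the continuous $\overline D$-valued test function $\pi(u^{k,n})$ for one piece, and bound the other by $\sup_{s\le T}|u^{k,n}-\pi(u^{k,n})|\cdot\mathrm{Var}_H(L^k)([0,T])$, uniformly small by (\ref{3}) once $\sup_{k\in S_N}\mathrm{Var}_H(L^k)<\infty$ is established via (\ref{2}) and lower semicontinuity of the variation. You also dispense with the exponential weight, applying Gronwall with the integrable weight $1+|k|+\|u^{k,n}\|^2$, which is uniformly bounded in $L^1(0,T)$ over $S_N$ and $n$ by Proposition~\ref{prop}. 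Your route is more direct (no Cauchy detour, convergence to the actual limit) but pays for it with two ingredients the paper avoids: a chain rule for $|w^{k,n}|^2$ in the presence of the BV forcing $dL^k$, and the uniform-over-$S_N$ bound on $\mathrm{Var}_H(L^k)$. You correctly flag these as the delicate points, and the arguments you indicate for them (admissibility of $\pi(u^{k,n})$ in (\ref{def5}); $\mathrm{Var}_H(L^{k,n})=n\int_0^T|u^{k,n}-\pi(u^{k,n})|\,ds\le C_{N,T}$ combined with weak lower semicontinuity of the $H$-variation) are sound.
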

\begin{proof} Recall that for each fixed $k \in K$, $u^{k,n}\rightarrow u^k$ as $n\rightarrow \infty$. Thus, it's sufficient to show
\begin{equation}
\lim\limits_{n,m\rightarrow\infty}\sup\limits_{k\in S_N}\left\{\sup\limits_{0\leq t\leq T}|u^{k,n}(t) -u^{k,m}(t)|^2+\int_0^T\|u^{k,n}(r) -u^{k,m}(r)\|^2dr\right\}=0.
\end{equation}
For $\lambda>4$ and $n\in\mathbb{N}$, define the function $f_n(t)=exp(-\lambda\int_0^t\|u^{k,n}(s)\|^2ds),$ $t\geq 0$.\\
Let $m$, $n$ be integers with $m\geq n$. Using the chain rule we can infer that
\begin{equation}
\begin{aligned}
f_n(t)|u^{k,n}(t) -u^{k,m}(t)|^2\\
=& -\lambda\int_0^t f_n(s)\|u^{k,n}(s)\|^2|u^{k,n}(s) -u^{k,m}(s)|^2 ds\\
 & -2\int_0^t f_n(s)(u^{k,n}(s) -u^{k,m}(s), A(u^{k,n}(s) -u^{k,m}(s)))ds\\
 & +2\int_0^t f_n(s)\langle u^{k,n}(s) -u^{k,m}(s), \sigma(u^{k,n}(s)) -\sigma(u^{k,m}(s))\rangle k(s)ds\\
 & +2\int_0^t f_n(s)\langle u^{k,n}(s) -u^{k,m}(s), f(u^{k,n}(s)) -f(u^{k,m}(s))\rangle ds\\
 & +2\int_0^t f_n(s)\langle u^{k,n}(s) -u^{k,m}(s), B(u^{k,n}(s),u^{k,n}(s)) -B(u^{k,m}(s),u^{k,m}(s))\rangle ds\\
 & -2n\int_0^t f_n(s)\langle u^{k,n}(s) -u^{k,m}(s), u^{k,n}(s) -\pi(u^{k,n}(s))\rangle ds\\
 & +2m\int_0^t f_n(s)\langle u^{k,n}(s) -u^{k,m}(s), u^{k,m}(s) -\pi(u^{k,m}(s))\rangle  ds\\
=&\ I_1^{n,m}+I_2^{n,m}+I_3^{n,m} +I_4^{n,m}+I_5^{n,m}+I_6^{n,m} +I_7^{n,m}.\\
\end{aligned}
\end{equation}
Observe that
\begin{equation}
I_2^{n,m}=-2\int_0^t f_n(s)\|u^{k,n}(s) -u^{k,m}(s)\|^2 ds.
\end{equation}
By the assumptions on $f$, $\sigma$, $H\ddot{o}lder's$ inequality and Young's inequality, we obtain
\begin{equation}
\begin{aligned}
I_3^{n,m}&\leq C\int_0^t f_n(s)| u^{k,n}(s) -u^{k,m}(s)| | \sigma(u^{k,n}(s)) -\sigma(u^{k,m}(s))| |k(s)| ds\\
         &\leq \frac{1}{2}\sup\limits_{0\leq s \leq t} f_n(s)| u^{k,n}(s) -u^{k,m}(s)|^2  + C \int_0^t f_n(s)| u^{k,n}(s) -u^{k,m}(s)|^2 |k(s)|^2ds.\\
I_4^{n,m}&\leq C\int_0^t f_n(s)\| u^{k,n}(s) -u^{k,m}(s)\| \| f(u^{k,n}(s)) -f(u^{k,m}(s))\|_{V^\ast} ds\\
         &\leq C\int_0^t f_n(s)\| u^{k,n}(s) -u^{k,m}(s)\| | u^{k,n}(s) -u^{k,m}(s)| ds\\
         &\leq \frac{1}{2}\int_0^t f_n(s)\| u^{k,n}(s) -u^{k,m}(s)\|^2 ds + C \int_0^t f_n(s)| u^{k,n}(s) -u^{k,m}(s)|^2 ds.
\end{aligned}
\end{equation}
Recalling the properties of  $\overline{b}$ and $B$, we have
\begin{equation}
\overline{b}(u^{k,m}(s),u^{k,m}(s),u^{k,n}(s)-u^{k,m}(s))=\overline{b}(u^{k,m}(s),u^{k,n}(s),u^{k,n}(s)-u^{k,m}(s)).\\
\end{equation}
Then, by \textbf{(A.2)}, \textbf{(A.3)}
\begin{equation}
\begin{aligned}
I_5^{n,m}&= 2\int_0^t f_n(s)\big(\overline{b}(u^{k,n}(s), u^{k,n}(s),u^{k,n}(s) -u^{k,m}(s))-\overline{b}(u^{k,m}(s),u^{k,n}(s), u^{k,n}(s) -u^{k,m}(s))\big) ds\\
&\leq 2\int_0^t f_n(s)\big|\overline{b}(u^{k,n}(s), u^{k,n}(s),u^{k,n}(s) -u^{k,m}(s))-\overline{b}(u^{k,m}(s),u^{k,n}(s), u^{k,n}(s) -u^{k,m}(s))\big|ds\\
&\leq 2\int_0^t f_n(s)\big|\overline{b}(u^{k,n}(s)-u^{k,m}(s), u^{k,n}(s),u^{k,n}(s) -u^{k,m}(s))|ds\\
&\leq 4\int_0^t f_n(s)|u^{k,n}(s)-u^{k,m}(s)|\|u^{k,n}(s)-u^{k,m}(s)\|  \|u^{k,n}(s)\|ds\\
&\leq \int_0^t f_n(s)\|u^{k,n}(s)-u^{k,m}(s)\|^2 ds +\int_0^t|u^{k,n}(s)-u^{k,m}(s)|^2 \|u^{k,n}(s)\|^2ds,\\
\end{aligned}
\end{equation}
where the inequality $4ab\leq a^2 +4b^2$ has been used in the last inequality.\\
It follows from (\ref{1}) that $\langle u^{k,n}(s) -\pi(u^{k,m}(s)), u^{k,n}(s) -\pi(u^{k,n}(s))\rangle\geq 0$, hence
\begin{equation}
\begin{aligned}
I_6^{n,m}= &-2n\int_0^t f_n(s)\langle u^{k,n}(s) -u^{k,m}(s), u^{k,n}(s) -\pi(u^{k,n}(s))\rangle ds\\
= &-2n\int_0^t f_n(s)\langle u^{k,n}(s) -\pi(u^{k,m}(s)), u^{k,n}(s) -\pi(u^{k,n}(s))\rangle ds\\
 &+2n\int_0^t f_n(s)\langle u^{k,m}(s) -\pi(u^{k,m}(s)), u^{k,n}(s) -\pi(u^{k,n}(s))\rangle ds\\
\leq &\quad 2n\int_0^t f_n(s)\langle u^{k,m}(s) -\pi(u^{k,m}(s)), u^{k,n}(s) -\pi(u^{k,n}(s))\rangle ds\\
\leq &\quad 2n(\int_0^t |u^{k,n}(s) -\pi(u^{k,n}(s))|ds)\sup\limits_{0\leq s \leq t} |u^{k,m}(s) -\pi(u^{k,m}(s))|.
\end{aligned}
\end{equation}
And similarly
\begin{equation}
I_7^{n,m}\leq \ 2m(\int_0^t |u^{k,m}(s) -\pi(u^{k,m}(s))|ds)\sup\limits_{0\leq s \leq t} |u^{k,n}(s) -\pi(u^{k,n}(s))|.
\end{equation}
Combining the above estimates together, remembering that  $\lambda>4$, using the H$\ddot{o}$lder inequalities, we obtain that
\begin{equation}
\begin{aligned}
&\sup\limits_{0\leq t \leq T}f_n(t)|u^{k,n}(t) -u^{k,m}(t)|^2 + \int_0^T f_n(s)\|u^{k,n}(s) -u^{k,m}(s)\|^2ds\\
\leq \ &\frac{1}{2}\sup\limits_{0\leq t \leq T}f_n(t)|u^{k,n}(t) -u^{k,m}(t)|^2+ C \int_0^T f_n(s)(1+|k(s)|^2)|u^{k,n}(s) -u^{k,m}(s)|^2ds\\
 + &\ C\left(2n\int_0^T |u^{k,n}(s) -\pi(u^{k,n}(s))|ds\right)\sup\limits_{0\leq t \leq T}|u^{k,m}(t) -\pi(u^{k,m}(t))|\\
 +&\ C\left(2m\int_0^T |u^{k,m}(s) -\pi(u^{k,m}(s))|ds\right) \sup\limits_{0\leq t \leq T} |u^{k,n}(t) -\pi(u^{k,n}(t))|.
\end{aligned}
\end{equation}
In view of (\ref{2}), by Gronwall Lemma we get
\begin{equation}\label{10-3}
\begin{aligned}
&\sup\limits_{0\leq t \leq T}f_n(t)|u^{k,n}(t) -u^{k,m}(t)|^2 + \int_0^T f_n(s)\|u^{k,n}(s) -u^{k,m}(s)\|^2ds\\
&\leq \ C(C_{N,T})\sup\limits_{0\leq t \leq T}|u^{k,m}(t) -\pi(u^{k,m}(t))| +C(C_{N,T})\sup\limits_{0\leq t \leq T} |u^{k,n}(t) -\pi(u^{k,n}(t))|.
\end{aligned}
\end{equation}
Then by (\ref{3}), it follows that
\begin{equation}
\lim\limits_{n,m\rightarrow\infty}\sup\limits_{k\in S_N}\left\{\sup\limits_{0\leq t\leq T}|u^{k,n}(t) -u^{k,m}(t)|^2+\int_0^T\|u^{k,n}(r) -u^{k,m}(r)\|^2dr\right\}=0.
\end{equation}
\end{proof}

\begin{proposition}\label{prop2} With the assumptions in Theorem \ref{4.1} hold. For fixed integer $n\in \mathbb{N}$,
\begin{equation}
\lim\limits_{\varepsilon\rightarrow 0}|u^{k^\varepsilon,n}-u^{k,n}|_{X_T}=0,
\end{equation}
where $u^{k^\varepsilon,n}$ denotes the solution of equation $(\ref{penal})$ with $k^\varepsilon$ replacing $k$.
\end{proposition}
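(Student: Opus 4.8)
The plan is to compare the two penalized solutions directly and run an energy estimate on their difference, reducing everything to the smallness of a single ``bad'' term which will be controlled by a compactness argument that converts the weak convergence $k^\varepsilon\rightharpoonup k$ into something usable. Throughout write $v^\varepsilon:=u^{k^\varepsilon,n}$, $v:=u^{k,n}$ (with $n$ fixed) and $w^\varepsilon:=v^\varepsilon-v$, so that $w^\varepsilon(0)=0$ and, subtracting the two copies of (\ref{penal}),
\[
\frac{dw^\varepsilon}{dt}+Aw^\varepsilon=\big(f(v^\varepsilon)-f(v)\big)+\big(B(v^\varepsilon,v^\varepsilon)-B(v,v)\big)+\big(\sigma(v^\varepsilon)k^\varepsilon-\sigma(v)k\big)-n\big((v^\varepsilon-\pi(v^\varepsilon))-(v-\pi(v))\big)
\]
in $V^\ast$. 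First I would record the uniform a priori bounds: since $k^\varepsilon\in S_N$, Proposition \ref{prop} gives $\sup_\varepsilon\sup_{t\le T}|v^\varepsilon(t)|^2+\sup_\varepsilon\int_0^T\|v^\varepsilon\|^2\,ds\le C_{N,T}$, and feeding these back into the equation yields a uniform bound for $\tfrac{d}{dt}v^\varepsilon$ in $L^2([0,T];V^\ast)$ (using $\|Av^\varepsilon\|_{V^\ast}=\|v^\varepsilon\|$, $\|B(v^\varepsilon,v^\varepsilon)\|_{V^\ast}\le 2\|v^\varepsilon\||v^\varepsilon|$, and the Lipschitz bounds (\ref{A1}) on $f,\sigma$).

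The central step is to obtain \emph{strong} convergence $w^\varepsilon\to0$ in $L^2([0,T];H)$. Combining the bound on $v^\varepsilon$ in $L^2([0,T];V)$ with the bound on $\tfrac{d}{dt}v^\varepsilon$ in $L^2([0,T];V^\ast)$, a compactness argument of Aubin--Lions--Simon type lets me extract a subsequence along which $v^\varepsilon$ converges strongly in $L^2([0,T];H)$, weakly in $L^2([0,T];V)$ and weak-$\ast$ in $L^\infty([0,T];H)$. I would then pass to the limit in (\ref{penal}): the bilinear term converges because of the strong $L^2([0,T];H)$ convergence together with the weak $L^2([0,T];V)$ convergence; the driving term passes to the limit since $\sigma(v^\varepsilon)$ converges strongly in $L^2([0,T];H)$ (by the Lipschitz property) while $k^\varepsilon\rightharpoonup k$ weakly in $K$, so a strongly and a weakly convergent factor are paired; and the penalization term passes by continuity of $\pi$. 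The limit solves (\ref{penal}) with data $k$, hence equals $v=u^{k,n}$ by uniqueness, and since the limit is independent of the subsequence, the whole family satisfies $\|w^\varepsilon\|_{L^2([0,T];H)}\to0$.

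With this strong convergence in hand I would upgrade to convergence in $X_T$ by an energy estimate on $|w^\varepsilon(t)|^2$, carried out exactly as in the chain-rule computation of Proposition \ref{prop1}. The $A$-term is coercive ($-2\int_0^t\|w^\varepsilon\|^2$); the $f$-term is handled by (\ref{A1}) and Young's inequality; the bilinear term is split as $B(v^\varepsilon,v^\varepsilon)-B(v,v)=B(v^\varepsilon,w^\varepsilon)+B(w^\varepsilon,v)$ so that $\langle w^\varepsilon,B(v^\varepsilon,w^\varepsilon)\rangle=0$ by antisymmetry and $|\langle w^\varepsilon,B(w^\varepsilon,v)\rangle|\le 2\|w^\varepsilon\||w^\varepsilon|\|v\|$ by assumption (A.2); and, crucially, because both solutions share the \emph{same} penalization level $n$, the penalization-difference term $-2n\int_0^t\big(w^\varepsilon,(v^\varepsilon-\pi(v^\varepsilon))-(v-\pi(v))\big)ds$ is nonpositive by monotonicity of $y\mapsto y-\pi(y)$ (a consequence of (\ref{1})) and may simply be dropped. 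Writing $\sigma(v^\varepsilon)k^\varepsilon-\sigma(v)k=(\sigma(v^\varepsilon)-\sigma(v))k^\varepsilon+\sigma(v)(k^\varepsilon-k)$, the first piece contributes a term bounded by $C\int_0^t|k^\varepsilon(s)||w^\varepsilon(s)|^2\,ds$. After absorbing the $\tfrac12\sup$-terms to the left, Gronwall's lemma with the $L^1$-in-time, uniformly bounded weight $g(s)=C(1+|k^\varepsilon(s)|^2+\|v(s)\|^2)$ gives
\[
\sup_{t\le T}|w^\varepsilon(t)|^2+\int_0^T\|w^\varepsilon(s)\|^2\,ds\le C\,\sup_{t\le T}|J^\varepsilon(t)|,\qquad J^\varepsilon(t):=2\int_0^t\big(w^\varepsilon(s),\sigma(v(s))(k^\varepsilon(s)-k(s))\big)ds.
\]

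Finally I would close the argument by estimating the remaining term directly: since $|\sigma(v(\cdot))|$ is bounded on $[0,T]$ (by Proposition \ref{prop}), Cauchy--Schwarz gives $\sup_{t\le T}|J^\varepsilon(t)|\le 2\big(\sup_{s\le T}|\sigma(v(s))|\big)\|w^\varepsilon\|_{L^2([0,T];H)}\|k^\varepsilon-k\|_K$, and because $\|k^\varepsilon-k\|_K\le 2\sqrt N$ stays bounded while $\|w^\varepsilon\|_{L^2([0,T];H)}\to0$, the right-hand side tends to $0$; hence $|w^\varepsilon|_{X_T}\to0$, which is the claim. The main obstacle is the compactness step, i.e. producing the strong $L^2([0,T];H)$ convergence: this is the only point where the weak convergence of $k^\varepsilon$ must be genuinely converted (it cannot be controlled in norm), and where passing to the limit in the bilinear term and in the weak-convergence-only term $\sigma(v)(k^\varepsilon-k)$ requires true compactness rather than the soft energy bookkeeping used for all the other terms; how this compactness is secured is exactly what depends on the structure of the concrete model.
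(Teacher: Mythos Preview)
Your argument is correct and follows the same overall architecture as the paper: establish relative compactness of $\{u^{k^\varepsilon,n}\}_\varepsilon$ in $L^2(0,T;H)$ via Aubin--Lions type bounds, then run an energy estimate on the difference and reduce to controlling the single residual term coming from $\sigma(v)(k^\varepsilon-k)$.

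There are, however, a few tactical differences worth noting. First, the paper does \emph{not} identify the strong $L^2(0,T;H)$ limit: it extracts a subsequence converging to some $\tilde u$ and then splits the bad integral as a $(u^{k^{\varepsilon_{m_i}},n}-\tilde u)$ piece (handled by strong convergence) plus a $(\tilde u-u^{k,n})$ piece (handled by the weak convergence $k^{\varepsilon_{m_i}}\rightharpoonup k$ together with an equicontinuity argument to pass to the $\sup_t$). You instead pass to the limit in the penalized equation, invoke uniqueness to get $\tilde u=u^{k,n}$, and therefore obtain $\|w^\varepsilon\|_{L^2(0,T;H)}\to0$ directly; your final bound $\sup_t|J^\varepsilon(t)|\le C\|w^\varepsilon\|_{L^2}\|k^\varepsilon-k\|_K$ is then immediate. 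Your route is cleaner but costs the (standard, though not entirely trivial) limit passage in the bilinear and noise terms; the paper's route avoids this at the price of a slightly more intricate endgame. Second, for the penalization contribution you drop it by monotonicity of $y\mapsto y-\pi(y)$, whereas the paper keeps it via the Lipschitz bound \eqref{4} and absorbs the resulting $6n\int|w^\varepsilon|^2$ into the Gronwall weight; your observation is sharper and makes the dependence on $n$ in the Gronwall constant disappear. Third, the paper carries the exponential weight $f_n(t)=\exp(-\lambda\int_0^t\|u^{k,n}\|^2)$ to neutralize the $\|v\|^2$ coming from the trilinear term, while you simply put $\|v\|^2$ into the Gronwall weight; these are equivalent since $\int_0^T\|v\|^2<\infty$.

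Both proofs rely on the same compactness input (the paper packages it as Lemma~\ref{lem}), and both ultimately convert the merely weak convergence of $k^\varepsilon$ into something useful only through that compactness---exactly as you emphasize in your closing remark.
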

Before we proceed to the proof, we recall the following lemma from \cite{NTT}(see \cite{NTT} Lemma 4.3 for more details).
\begin{lemma}\label{lem}Let $\alpha \in (0,1)$ be given. Let $\mathcal{G}$ be the space
\begin{equation}
\mathcal{G}=L^\infty(0,T;H)\cap L^2(0,T;V) \cap W^{\alpha,2}(0,T;V^{\ast})
\end{equation}
endowed with the natural norm. Then the embedding of $\mathcal{G}$ in $L^2(0,T;H)$ is compact.
\end{lemma}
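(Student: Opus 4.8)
The plan is to prove this as a standard Aubin--Lions--Simon compactness result. Throughout I use that the embedding $V\hookrightarrow H$ is compact; this is the hypothesis under which the statement holds (equivalently, $A$ has compact resolvent), and composing it with the continuous embedding $H\hookrightarrow V^{\ast}$ shows that $V\hookrightarrow V^{\ast}$ is compact as well. Let $\{u_m\}_{m\ge 1}$ be an arbitrary bounded sequence in $\mathcal{G}$, say $\sup_m\|u_m\|_{\mathcal{G}}\le M$. Since $L^2(0,T;H)$ is a metric space, it suffices to extract from $\{u_m\}$ a subsequence that converges strongly in $L^2(0,T;H)$. Note that only the $L^2(0,T;V)$ and $W^{\alpha,2}(0,T;V^{\ast})$ parts of the norm will be needed.

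The key step is to show that $\{u_m\}$ is relatively compact in $L^2(0,T;V^{\ast})$, and for this I would verify the hypotheses of the vector-valued Fr\'echet--Kolmogorov (Simon) compactness criterion in $L^2(0,T;V^{\ast})$. For the spatial localization, the bound $\int_0^T\|u_m(t)\|^2\,dt\le M^2$ together with the compactness of $V\hookrightarrow V^{\ast}$ controls the local time-averages $\tfrac1h\int_t^{t+h}u_m(s)\,ds$, which lie in a fixed bounded subset of $V$ and hence in a compact subset of $V^{\ast}$. For the equicontinuity in time, the standard embedding of $W^{\alpha,2}$ into the Nikolskii space $B^{\alpha}_{2,\infty}$ yields
\begin{equation}\nonumber
\int_0^{T-h}\|u_m(t+h)-u_m(t)\|_{V^{\ast}}^2\,dt\le C\,h^{2\alpha}\,\|u_m\|_{W^{\alpha,2}(0,T;V^{\ast})}^2\le C\,M^2\,h^{2\alpha},
\end{equation}
so that $\sup_m\|u_m(\cdot+h)-u_m(\cdot)\|_{L^2(0,T-h;V^{\ast})}\to 0$ as $h\to 0$, uniformly in $m$. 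These two facts are exactly what the criterion requires, producing a subsequence (still denoted $\{u_m\}$) and a limit $u$ with $u_m\to u$ strongly in $L^2(0,T;V^{\ast})$.

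To upgrade this to convergence in $L^2(0,T;H)$ I would pass to a further subsequence along which $u_m\rightharpoonup u$ weakly in $L^2(0,T;V)$ (possible by the uniform $L^2(0,T;V)$ bound), the weak limit necessarily coinciding with the strong $V^{\ast}$-limit; in particular $u\in L^2(0,T;V)$ with $\|u\|_{L^2(0,T;V)}\le M$. Now I invoke the Gelfand-triple interpolation inequality $|v|^2=\langle v,v\rangle\le\|v\|\,\|v\|_{V^{\ast}}$ for $v\in V$. Applying it pointwise in time and then Cauchy--Schwarz gives
\begin{equation}\nonumber
\int_0^T|u_m(t)-u(t)|^2\,dt\le \Big(\int_0^T\|u_m-u\|^2\,dt\Big)^{1/2}\Big(\int_0^T\|u_m-u\|_{V^{\ast}}^2\,dt\Big)^{1/2}\le 2M\,\|u_m-u\|_{L^2(0,T;V^{\ast})}.
\end{equation}
The first factor is bounded by $2M$ while the second tends to $0$, so $u_m\to u$ in $L^2(0,T;H)$, which establishes the compactness of the embedding.

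The step I expect to be the main obstacle is the relative compactness in $L^2(0,T;V^{\ast})$ --- specifically, converting the global double-integral $W^{\alpha,2}$-seminorm into uniform control of time-translates and then feeding this, together with the compactness of $V\hookrightarrow V^{\ast}$, into a Bochner-space compactness theorem. Everything afterward (the weak-compactness extraction and the interpolation estimate) is routine. If one prefers to avoid quoting the abstract criterion, I would fall back on an explicit Fourier-in-time argument: extend each $u_m$ to $\mathbb{R}$ preserving the $W^{\alpha,2}$-norm, split the Plancherel integral $\int(1+|\tau|^{2\alpha})\|\widehat{u_m}(\tau)\|_{V^{\ast}}^2\,d\tau$ into low and high frequencies, use $\int_{|\tau|>R}\|\widehat{u_m}\|_{V^{\ast}}^2\,d\tau\le R^{-2\alpha}CM^2$ to make the high-frequency part uniformly small, and exploit the compactness of $V\hookrightarrow V^{\ast}$ on the low-frequency band.
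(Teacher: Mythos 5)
Your proof is correct, but it cannot be ``the same approach as the paper'' because the paper does not prove this lemma at all: it is quoted verbatim from the reference [NTT] (Nguyen--Tawri--Temam, Lemma 4.3), so the comparison is between your self-contained argument and a citation. What you have written is essentially the classical proof of the Flandoli--Gat\c{a}rek compactness theorem: the embedding $L^2(0,T;V)\cap W^{\alpha,2}(0,T;V^{\ast})\hookrightarrow L^2(0,T;H)$ is compact whenever $V\hookrightarrow H$ is compact, obtained by first getting relative compactness in $L^2(0,T;V^{\ast})$ from Simon's criterion (time-averages bounded in $V$, hence precompact in $V^{\ast}$; translate-equicontinuity from $W^{\alpha,2}\subset B^{\alpha}_{2,\infty}$) and then upgrading to $L^2(0,T;H)$ via the interpolation $|v|^2\leq \|v\|\,\|v\|_{V^{\ast}}$; each of these steps is sound, and your observation that the $L^\infty(0,T;H)$ component of $\mathcal{G}$ is never used is also accurate (it is redundant for this particular compactness statement). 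Your explicit flagging of the hypothesis that $V\hookrightarrow H$ be compact is a genuine improvement on the paper: the lemma as stated, under only the standing assumptions of Section 2 ($A$ self-adjoint, positive definite), is false without it --- take $A=I$, so $V=H=V^{\ast}$, and the constant-in-time sequence $u_m(t)=e_m$ with $\{e_m\}$ orthonormal is bounded in $\mathcal{G}$ but has no subsequence converging in $L^2(0,T;H)$ --- so compactness of the embedding (equivalently, $A$ having compact resolvent) is an unstated standing hypothesis that the paper inherits silently from [NTT]. This matters beyond bookkeeping: in the paper's own Section 6 application to damped Navier--Stokes on the whole space $\mathbb{R}^2$, the embedding $H^1\hookrightarrow L^2$ is \emph{not} compact, so the lemma (and hence Proposition 4.3 as proved) does not apply there without further modification (weighted spaces, local compactness plus tail estimates, or similar); your proof makes this limitation visible where the paper's citation hides it. In short: your argument is correct and complete under the (necessary) compact-embedding hypothesis, buys a transparent, quotable proof where the paper offers only a pointer, and your fallback Fourier-splitting argument is the other standard route (Lions--Temam) and would work equally well.
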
           

\hspace{-1.5em}\textit{{\bf Proof of Proposition 4.3}}
\vspace{1em}\\
First we prove that the family $\{u^{k^\varepsilon,n},\varepsilon > 0\}$ is relatively compact in $L^2(0,T;H)$. To this end, by Lemma \ref{lem}, it is sufficient to show that $\{u^{k^\varepsilon,n},\varepsilon > 0\}$ is a bounded subset of the space $\mathcal{G}$.  As $\{k^\varepsilon;\varepsilon>0\}\subset S_N$, we infer from the proof of Proposition \ref{prop} that $\{u^{k^\varepsilon,n},\varepsilon > 0\}$ is a bounded subset of the space $L^\infty(0,T;H)\cap L^2(0,T;V)$. Next we show that $\{u^{k^\varepsilon,n},\varepsilon > 0\}$ is also bounded in the space $W^{\alpha,2}(0,T;V^{\ast})$. From the equation satisfied by $u^{k^\varepsilon,n}$ we have
\begin{equation}\label{10-4}
\begin{aligned}
&\ \|u^{k^\varepsilon,n}\|_{W^{\alpha,2}(0,T;V^{\ast})}\leq \|u^{k^\varepsilon,n}\|_{W^{1,2}(0,T;V^{\ast})}\\
\lesssim &\ \|u_0 -\int_0^\cdot Au^{k^\varepsilon,n}(s)ds + \int_0^\cdot f(u^{k^\varepsilon,n}(s))ds + \int_0^\cdot B(u^{k^\varepsilon,n}(s),u^{k^\varepsilon,n}(s))ds \\
&\hspace{5cm}+\int_0^\cdot \sigma(u^{k^\varepsilon,n}(s))k^\varepsilon(s)ds - n\int_0^\cdot(u^{k^\varepsilon,n}(s)-\pi(u^{k^\varepsilon,n}(s))ds\|^2_{{W^{1,2}(0,T;V^{\ast})}}\\
\lesssim &\ \|u_0 -\int_0^\cdot Au^{k^\varepsilon,n}(s)ds + \int_0^\cdot f(u^{k^\varepsilon,n}(s))ds + \int_0^\cdot B(u^{k^\varepsilon,n}(s),u^{k^\varepsilon,n}(s))ds\\
&\hspace{5cm} +\int_0^\cdot \sigma(u^{k^\varepsilon,n}(s))k^\varepsilon(s)ds - n\int_0^\cdot(u^{k^\varepsilon,n}(s)-\pi(u^{k^\varepsilon,n}(s))ds\|^2_{{L^2(0,T;V^{\ast})}}\\
&\ +\int_0^T |Au^{k^\varepsilon,n}(s)|^2_{V^{\ast}}ds + \int_0^T|f(u^{k^\varepsilon,n}(s))|^2_{V^{\ast}}ds + \int_0^T| B(u^{k^\varepsilon,n}(s),u^{k^\varepsilon,n}(s))|^2_{V^{\ast}}ds\\
&\hspace{5cm} +\int_0^T |\sigma(u^{k^\varepsilon,n}(s))k^\varepsilon(s)|^2_{V^{\ast}}ds + \int_0^T|n(u^{k^\varepsilon,n}(s)-\pi(u^{k^\varepsilon,n}(s)))|^2_{V^{\ast}}ds\\
\lesssim&\ |u_0|^2 + \int_0^T |Au^{k^\varepsilon,n}(s)|^2_{V^{\ast}}ds + \int_0^T|f(u^{k^\varepsilon,n}(s))|^2_{V^{\ast}}ds + \int_0^T| B(u^{k^\varepsilon,n}(s),u^{k^\varepsilon,n}(s))|^2_{V^{\ast}}ds\\
&\hspace{5cm} +\int_0^T |\sigma(u^{k^\varepsilon,n}(s))k^\varepsilon(s)|^2_{V^{\ast}}ds + \int_0^T|n(u^{k^\varepsilon,n}(s)-\pi(u^{k^\varepsilon,n}(s)))|^2_{V^{\ast}}ds\\
= &\ |u_0|^2 + K_1 + K_2 + K_3 + K_4 + K_5.
\end{aligned}
\end{equation}
Recalling the assumptions on the coefficients, we have
\begin{equation}\label{10-5}
\begin{aligned}
K_1&\leq\int_0^T\|u^{k^\varepsilon,n}(s)\|^2ds.\\
K_2&\leq\int_0^T C(1+ |u^{k^\varepsilon,n}(s)|^2)ds.\\
K_3&\leq\int_0^T2\|u^{k^\varepsilon,n}(s)\|^2|u^{k^\varepsilon,n}(s)|^2ds\leq 2 \sup \limits_{0\leq t \leq T}|u^{k^\varepsilon,n}(t)|^2 \int _0^T\|u^{k^\varepsilon,n}(s)\|^2ds.\\
K_4&\leq \int_0^T| C \sigma(u^{k^\varepsilon,n}(s))k^\varepsilon(s)|^2ds\leq C \sup \limits_{0\leq t \leq T}(1+|u^{k^\varepsilon,n}(t)|^2 )\int_0^T|k^\varepsilon(s)|^2 ds.
\end{aligned}
\end{equation}
and by (\ref{5}),
\begin{equation}
K_5\leq \int_0^T|n(u^{k^\varepsilon,n}(s)-\pi(u^{k^\varepsilon,n}(s)))|^2ds\leq n(\sup \limits_n n\int_0^T|u^{k,n}(s)-\pi(u^{k,n}(s))|^2ds)\leq n C_{N,T}.
\end{equation}
Since $\{u^{k^\varepsilon,n},\varepsilon > 0\}$ is bounded in $\subset L^\infty(0,T;H)\cap L^2(0,T;V)$, we deduce from (\ref{10-4}), (\ref{10-5}) that $\sup\limits_\varepsilon \|u^{k^\varepsilon,n}\|_{W^{\alpha,2}(0,T;V^{\ast})}<\infty$.
Hence, by Lemma \ref{lem}, $\{u^{k^\varepsilon,n}, \varepsilon>0\}$ is relatively compact in $L^2(0,T;H)$. Now we are ready to show that $u^{k^\varepsilon,n}\rightarrow u^{k,n}$ in the space $X_T$. \\
For $\lambda>4$, define a function $f_n(t)=exp(-\lambda\int_0^t\|u^{k,n}(s)\|^2ds),$ $t\geq 0$. By the chain rule, we have
\begin{equation}\label{j}
\begin{aligned}
f_n(t)|u^{k^\varepsilon,n}(t) -u^{k,n}(t)|^2\\
=& -\lambda\int_0^t f_n(s)\|u^{k,n}(s)\|^2|u^{k^\varepsilon,n}(s) -u^{k,n}(s)|^2 ds\\
 & -2\int_0^t f_n(s)(u^{k^\varepsilon,n}(s) -u^{k,n}(s), A(u^{k^\varepsilon,n}(s) -u^{k,n}(s)))ds\\
 & +2\int_0^t f_n(s)\langle u^{k^\varepsilon,n}(s) -u^{k,n}(s), \sigma(u^{k^\varepsilon,n}(s)) -\sigma(u^{k,n}(s))\rangle k^\varepsilon(s)ds\\
 & +2\int_0^t f_n(s)\langle u^{k^\varepsilon,n}(s) -u^{k,n}(s), \sigma(u^{k,n}(s))\rangle( k^\varepsilon(s)-k(s))ds\\
 & +2\int_0^t f_n(s)\langle u^{k^\varepsilon,n}(s) -u^{k,n}(s), f(u^{k^\varepsilon,n}(s)) -f(u^{k,n}(s))\rangle ds\\
 & +2\int_0^t f_n(s)\langle u^{k^\varepsilon,n}(s) -u^{k,n}(s), B(u^{k^\varepsilon,n}(s),u^{k^\varepsilon,n}(s)) -B(u^{k,n}(s),u^{k,n}(s))\rangle ds\\
 & -2n\int_0^t f_n(s)\langle u^{k^\varepsilon,n}(s) -u^{k,n}(s), u^{k^\varepsilon,n}(s) -\pi(u^{k^\varepsilon,n}(s))\rangle ds\\
 & +2n\int_0^t f_n(s)\langle u^{k^\varepsilon,n}(s) -u^{k,n}(s), u^{k,n}(s) -\pi(u^{k,n}(s))\rangle  ds\\
=&\ J_1^{n}+J_2^{n}+J_3^{n} +J_4^{n}+J_5^{n}+J_6^{n} +J_7^{n}+ J_8^{n}.\\
\end{aligned}
\end{equation}
Similarly  as in the proof of Proposition \ref{prop1}, we have
\begin{equation}
\begin{aligned}
J_2^{n}&=-2\int_0^tf_n(s)\|u^{k^\varepsilon,n}(s) -u^{k,n}(s)\|^2ds,\\
J_3^{n}&\leq2\int_0^tf_n(s)|k^\varepsilon(s)||u^{k^\varepsilon,n}(s) -u^{k,n}(s)|^2ds,\\
J_5^{n}&\leq\frac{1}{2}\int_0^tf_n(s)\|u^{k^\varepsilon,n}(s) -u^{k,n}(s)\|^2ds + C\int_0^tf_n(s)|u^{k^\varepsilon,n}(s) -u^{k,n}(s)|^2ds.
\end{aligned}
\end{equation}
By the assumption on $\overline{b}$ and $B$,
\begin{equation}
\langle u^{k^\varepsilon,n}(s) -u^{k,n}(s), B(u^{k^\varepsilon,n}(s),u^{k^\varepsilon,n}(s))\rangle=\langle u^{k^\varepsilon,n}(s) -u^{k,n}(s), B(u^{k^\varepsilon,n}(s),u^{k,n}(s))\rangle.
\end{equation}
Then,
\begin{equation}
\begin{aligned}
J_6^{n}&=2\int_0^t f_n(s)\langle u^{k^\varepsilon,n}(s) -u^{k,n}(s), B(u^{k^\varepsilon,n}(s),u^{k,n}(s)) -B(u^{k,n}(s),u^{k,n}(s))\rangle ds\\
 &=2\int_0^tf_n(s)\overline{b}(u^{k^\varepsilon,n}(s) -u^{k,n}(s),u^{k,n}(s),u^{k^\varepsilon,n}(s) -u^{k,n}(s))ds\\
 &\leq4\int_0^tf_n(s)\|u^{k,n}(s)\|\|u^{k^\varepsilon,n}(s) -u^{k,n}(s)\||u^{k^\varepsilon,n}(s) -u^{k,n}(s)|ds\\
 &\leq\int_0^tf_n(s)\|u^{k^\varepsilon,n}(s) -u^{k,n}(s)\|^2ds + 4\int_0^tf_n(s)\|u^{k,n}(s)\|^2|u^{k^\varepsilon,n}(s) -u^{k,n}(s)|^2ds.
\end{aligned}
\end{equation}
By (\ref{4}),
\begin{equation}
\begin{aligned}
J_7^{n}+J_8^{n}&=2n\int_0^t f_n(s)\langle u^{k^\varepsilon,n}(s) -u^{k,n}(s), u^{k,n}- u^{k^\varepsilon,n}(s)+\pi( u^{k^\varepsilon,n}(s))-\pi(u^{k,n})\rangle ds\\
 &\leq6n\int_0^t f_n(s) |u^{k^\varepsilon,n}(s) -u^{k,n}(s)|^2ds.
\end{aligned}
\end{equation}
Combining above estimates, we get
\begin{equation}\label{10-6}
\begin{aligned}
    &f_n(t)|u^{k^\varepsilon,n}(t) -u^{k,n}(t)|^2+\frac{1}{2}\int_0^tf_n(s)\|u^{k^\varepsilon,n}(s) -u^{k,n}(s)\|^2ds\\
&\leq\ \int_0^t C n (|k^\varepsilon(s)|+1)f_n(s)|u^{k^\varepsilon,n}(s) -u^{k,n}(s)|^2ds\\
& \ \ +2\int_0^t f_n(s)\langle u^{k^\varepsilon,n}(s) -u^{k,n}(s), \sigma(u^{k,n}(s))\rangle( k^\varepsilon(s)-k(s))ds.
\end{aligned}
\end{equation}
As we know that $\{k^\varepsilon;\varepsilon>0\}\subset S_N$, to show $\lim\limits_{\varepsilon\rightarrow 0}|u^{k^\varepsilon,n}-u^{k,n}|_{X_T}=0$, by (\ref{10-6}) and the Gronwall's inequality it suffices to prove
\begin{equation}
\lim\limits_{\varepsilon\rightarrow 0}\sup\limits_{0\leq t \leq T}|\int_0^t f_n(s)\langle u^{k^\varepsilon,n}(s) -u^{k,n}(s), \sigma(u^{k,n}(s))\rangle( k^\varepsilon(s)-k(s))ds|=0.
\end{equation}
This will be achieved if we show that for any sequence $\varepsilon_m \rightarrow 0$, we can find a subsequence $\varepsilon_{mi}\rightarrow 0$ s.t.
\begin{equation}
\lim\limits_{i\rightarrow \infty}\sup\limits_{0\leq t\leq T}|\int_0^t f_n(s)\langle u^{k^{\varepsilon_{mi}},n}(s) -u^{k,n}(s), \sigma(u^{k,n}(s))\rangle( k^{\varepsilon_{mi}}(s)-k(s))ds|=0.
\end{equation}
For a fixed sequence  $\varepsilon_m \rightarrow 0$, since $\{u^{k^{\varepsilon_m},n},m\geq 1\}$ is relatively compact in $L^2(0,T;H)$, there exists a subsequence $\{u^{k^{\varepsilon_{mi}},n},i\geq 1\}$ and a mapping $\tilde{u}\in L^2(0,T;H)$ such that $u^{k^{\varepsilon_{mi}},n}\rightarrow \tilde{u}$ in $L^2(0,T;H)$. We have
\begin{equation}
\begin{aligned}
& \sup\limits_{0\leq t\leq T}|\int_0^t f_n(s)\langle u^{k^{\varepsilon_{mi}},n}(s) -u^{k,n}(s), \sigma(u^{k,n}(s))\rangle( k^{\varepsilon_{mi}}(s)-k(s))ds|\\
&\leq\ \sup\limits_{0\leq t\leq T}|\int_0^t f_n(s)\langle u^{k^{\varepsilon_{mi}},n}(s) -\tilde{u}(s), \sigma(u^{k,n}(s))\rangle( k^{\varepsilon_{mi}}(s)-k(s))ds|\\
&\ \ +\sup\limits_{0\leq t\leq T}|\int_0^t f_n(s)\langle\tilde{u}(s) -u^{k,n}(s), \sigma(u^{k,n}(s))\rangle( k^{\varepsilon_{mi}}(s)-k(s))ds|.\\
\end{aligned}
\end{equation}
Recalling that  $|(\sigma(u^{k,n}(s))|\leq C(1+|u^{k,n}(s)|)$ and  (\ref{6}), we have
\begin{equation}
\begin{aligned}
&\sup\limits_{0\leq t\leq T}|\int_0^t f_n(s)\langle u^{k^{\varepsilon_{mi}},n}(s) -\tilde{u}(s), \sigma(u^{k,n}(s))\rangle( k^{\varepsilon_{mi}}(s)-k(s))ds|\\
&\leq C \left(\int_0^T|u^{k^{\varepsilon_{mi}},n}(s) -\tilde{u}(s)|^2ds\right)^\frac{1}{2}\left(2 \int_0^T|k^{\varepsilon_{mi}}(s)|^2+|k(s)|^2 ds\right)^\frac{1}{2}\sup\limits_{0\leq t\leq T}(1+|u^{k,n}(s)|)\\
&\lesssim \left(\int_0^T|u^{k^{\varepsilon_{mi}},n}(s) -\tilde{u}(s)|^2ds\right)^\frac{1}{2}.
\end{aligned}
\end{equation}
As $u^{k^{\varepsilon_{mi}},n}\rightarrow \tilde{u}$ in $L^2(0,T;H)$, we deduce that
\begin{equation}\label{l1}
\lim\limits_{i\rightarrow \infty}\sup\limits_{0\leq t\leq T}|\int_0^t f_n(s)\langle u^{k^{\varepsilon_{mi}},n}(s) -\tilde{u}(s), \sigma(u^{k,n}(s))\rangle( k^{\varepsilon_{mi}}(s)-k(s))ds|=0.
\end{equation}
Recall that $k^{\varepsilon_{mi}}\rightarrow k$ weakly in $L^2(0,T;\mathbb{R})$, then for every $t>0$,
\begin{equation}\label{u1}
\lim\limits_{i\rightarrow \infty}\int_0^t f_n(s)\langle\tilde{u}(s) -u^{k,n}(s), \sigma(u^{k,n}(s))\rangle( k^{\varepsilon_{mi}}(s)-k(s))ds=0.
\end{equation}
On the other hand, for $0 < t_1<t_2\leq T$, we have
\begin{equation}\label{u2}
\begin{aligned}
&|\int_{t_1}^{t_2} f_n(s)\langle\tilde{u}(s) -u^{k,n}(s), \sigma(u^{k,n}(s))\rangle( k^{\varepsilon_{mi}}(s)-k(s))ds|\\
&\leq \left(\int_{t_1}^{t_2}f_n^2(s)|\tilde{u}(s) -u^{k,n}(s)|^2ds\right)^\frac{1}{2}\left(\int_{t_1}^{t_2}|k^{\varepsilon_{mi}}(s)-k(s)|^2ds\right)^\frac{1}{2}\\
&\leq C \left(\int_{t_1}^{t_2}f_n^2(s)|\tilde{u}(s) -u^{k,n}(s)|^2ds\right)^\frac{1}{2}.\\
\end{aligned}
\end{equation}
Combining (\ref{u1}) and (\ref{u2}) we infer that
\begin{equation}\label{l2}
\lim\limits_{i\rightarrow \infty}\sup\limits_{0\leq t\leq T}|\int_0^t f_n(s)\langle\tilde{u}(s) -u^{k,n}(s), \sigma(u^{k,n}(s))\rangle( k^{\varepsilon_{mi}}(s)-k(s))ds|=0.
\end{equation}
Thus, it follows from (\ref{l1}) and (\ref{l2}) that
\begin{equation}
\lim\limits_{i\rightarrow \infty} \sup\limits_{0\leq t\leq T}|\int_0^t f_n(s)\langle u^{k^{\varepsilon_{mi}},n}(s) -u^{k,n}(s), \sigma(u^{k,n}(s))\rangle( k^{\varepsilon_{mi}}(s)-k(s))ds|=0,
\end{equation}
completing the proof.


\section{Large Deviations}
\setcounter{equation}{0}
 \setcounter{definition}{0}
Recall that $u^\varepsilon$ is the solution of the reflected SEE:
\begin{equation}\label{ldp}    
\left\{
\begin{aligned}
du^\varepsilon(t)&+Au^\varepsilon(t)dt =f(u^\varepsilon(t))dt +B(u^\varepsilon(t),u^\varepsilon(t))dt + \sqrt{\varepsilon}\sigma(u^\varepsilon(t))dW(t)+dL^\varepsilon(t), t\geq0,\\
 u^\varepsilon(0)& =u_0, \quad u_0\in \overline{D},
\end{aligned}
\right.
\end{equation}
For $k\in K= L^2(0,T;H)$ and consider the deterministic reflected skeleton equation:
\begin{equation}\label{deter1} 
\left\{
\begin{aligned}
du^k(t)&+Au^k(t)dt =f(u^k(t))dt +B(u^k(t),u^k(t))dt + \sigma(u^k(t))k(t)dt+dL^k(t), t\geq0,\\
 u^k(0)& =u_0, \quad u_0\in \overline{D}.
\end{aligned}
\right.
\end{equation}
Define a mapping $\Gamma^0:C([0,T];\mathbb{R})\rightarrow X_T$ by
\begin{equation}
\Gamma^0\left(\int_0^\cdot k(s)ds\right):=u^k, \quad \text{For } k \in K,
\end{equation}
where $u^k$ is the solution of (\ref{deter1}). Here we introduce our main result:
\begin{theorem}\label{thldp}
Let the assumptions  \textbf{(A.1)-(A.3)} hold. Then the family $\{u^\varepsilon\}_{\varepsilon >0}$ satisfies a large deviation principle on the space $X_T$ with the rate function $I$ given by
\begin{equation}
I(g):=\inf\limits_{k\in K;g=\Gamma^0(\int_0^\cdot k(s)ds)}\left\{\frac{1}{2}\int_0^T k^2(s)ds\right\}, \quad g\in X_T,
\end{equation}
where the convention $\inf\{\emptyset\}=\infty$.
\end{theorem}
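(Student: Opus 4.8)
The plan is to verify the two sufficient conditions (i) and (ii) of Theorem \ref{cri} with $\mathcal{E}=X_T$, $\Gamma^\varepsilon(W(\cdot)):=u^\varepsilon$ the solution of (\ref{ldp}), and $\Gamma^0$ as defined above. Condition (ii) is precisely the content of Theorem \ref{4.1}, already established: if $k^\varepsilon\to k$ weakly in $K$ with $\{k^\varepsilon\}\subset S_N$, then $u^{k^\varepsilon}=\Gamma^0(\int_0^\cdot k^\varepsilon\,ds)\to u^{k}=\Gamma^0(\int_0^\cdot k\,ds)$ in $X_T$. Hence the entire burden is to verify condition (i). Fix $N<\infty$, $\{k^\varepsilon\}\subset\widetilde{S}_N$ and $\delta>0$. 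By the Girsanov theorem, $Y^\varepsilon:=\Gamma^\varepsilon(W(\cdot)+\tfrac{1}{\sqrt{\varepsilon}}\int_0^\cdot k^\varepsilon(s)\,ds)$ is the unique solution of the controlled reflected equation
\[
dY^\varepsilon+AY^\varepsilon\,dt=f(Y^\varepsilon)\,dt+B(Y^\varepsilon,Y^\varepsilon)\,dt+\sigma(Y^\varepsilon)k^\varepsilon\,dt+\sqrt{\varepsilon}\,\sigma(Y^\varepsilon)\,dW+dL^{Y^\varepsilon},
\]
while $Z^\varepsilon:=\Gamma^0(\int_0^\cdot k^\varepsilon\,ds)=u^{k^\varepsilon}$ solves the skeleton equation (\ref{deter1}) with the same (now random) control $k^\varepsilon$ and reflection $dL^{Z^\varepsilon}$. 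Thus $Y^\varepsilon$ and $Z^\varepsilon$ differ only through the small-noise term $\sqrt{\varepsilon}\,\sigma(Y^\varepsilon)\,dW$, and since $\rho(Y^\varepsilon,Z^\varepsilon)=|Y^\varepsilon-Z^\varepsilon|_{X_T}$, it suffices to show $\mathbb{E}|Y^\varepsilon-Z^\varepsilon|_{X_T}^2\to0$, whence condition (i) follows by Chebyshev's inequality. First I would record that, because $k^\varepsilon(\omega)\in S_N$ a.s., the a priori estimates of \cite{BZ} and Proposition \ref{prop}, combined with the Burkholder--Davis--Gundy inequality applied to the $\sqrt{\varepsilon}$-martingale, give $\varepsilon$-uniform bounds $\sup_\varepsilon\mathbb{E}[\sup_{[0,T]}|Y^\varepsilon|^2+\int_0^T\|Y^\varepsilon\|^2\,ds]<\infty$, alongside the deterministic bounds for $Z^\varepsilon$ from Proposition \ref{prop}.

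The core estimate uses the exponential weight $f_\varepsilon(t)=\exp(-\lambda\int_0^t\|Z^\varepsilon(s)\|^2\,ds)$ with $\lambda>4$, exactly as in Propositions \ref{prop1} and \ref{prop2}. Applying the It\^o formula to $f_\varepsilon(t)|Y^\varepsilon(t)-Z^\varepsilon(t)|^2$, the crucial simplification is that the two reflection terms combine with the favourable sign: testing the reflection inequality (\ref{def5}) for $Y^\varepsilon$ against the $\overline{D}$-valued process $Z^\varepsilon$, and for $Z^\varepsilon$ against $Y^\varepsilon$, and adding, gives
\[
\int_0^t\big(Y^\varepsilon(s)-Z^\varepsilon(s),\,dL^{Y^\varepsilon}(s)-dL^{Z^\varepsilon}(s)\big)\le0,
\]
so the local-time contribution to $d|Y^\varepsilon-Z^\varepsilon|^2$ may be discarded. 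The dissipative $A$-term is negative; the trilinear term is controlled through its antisymmetry and the bound in \textbf{(A.2)} (this is precisely why $f_\varepsilon$ is inserted, to absorb $4\|Z^\varepsilon\|^2|Y^\varepsilon-Z^\varepsilon|^2$); and the $f$, $\sigma$ terms are handled by \textbf{(A.1)}, Young's inequality and the factor $|k^\varepsilon|$, which is integrable since $k^\varepsilon\in S_N$. The only genuinely new contributions are the stochastic integral $2\sqrt{\varepsilon}\int_0^t f_\varepsilon\,(Y^\varepsilon-Z^\varepsilon,\sigma(Y^\varepsilon)\,dW)$ and its quadratic variation $\varepsilon\int_0^t f_\varepsilon|\sigma(Y^\varepsilon)|^2\,ds$; by BDG and the a priori bounds both are $O(\sqrt{\varepsilon})$ after taking the expectation of the supremum. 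A Gronwall argument, with $\int_0^T(1+|k^\varepsilon|^2)\,ds\le1+N$ entering the exponent, then yields $\mathbb{E}[\sup_{[0,T]}f_\varepsilon|Y^\varepsilon-Z^\varepsilon|^2+\int_0^T f_\varepsilon\|Y^\varepsilon-Z^\varepsilon\|^2\,ds]\lesssim\sqrt{\varepsilon}\to0$. Since $f_\varepsilon$ is bounded below by $\exp(-\lambda C_{N,T})$ uniformly in $\varepsilon$ by Proposition \ref{prop}, this delivers $\mathbb{E}|Y^\varepsilon-Z^\varepsilon|_{X_T}^2\to0$, hence $\mathbb{P}(|Y^\varepsilon-Z^\varepsilon|_{X_T}>\delta)\to0$, completing condition (i) and therefore the proof.

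I expect the main obstacle to be the rigorous handling of the reflection step: establishing well-posedness of the controlled equation for $Y^\varepsilon$ (so that Girsanov indeed produces a reflected solution of the form above) and justifying that the pathwise inequality (\ref{def5}) may be legitimately applied to both $Y^\varepsilon$ and $Z^\varepsilon$ to extract the sign of the combined local-time term, given that $Y^\varepsilon$, $Z^\varepsilon$ are only $\overline{D}$-valued continuous processes while $L^{Y^\varepsilon}$, $L^{Z^\varepsilon}$ are merely of bounded variation. A secondary difficulty is securing the $\varepsilon$-uniform a priori bounds for $Y^\varepsilon$ under the \emph{random} control $k^\varepsilon$, where the stochastic term must be dominated compatibly with Gronwall's lemma; this is exactly where the smallness $\sqrt{\varepsilon}$ and the BDG inequality are indispensable.
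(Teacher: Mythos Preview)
Your proposal is correct and follows essentially the same route as the paper: condition (ii) is Theorem \ref{4.1}, and for condition (i) you apply It\^o's formula to the weighted quantity $f_\varepsilon(t)|Y^\varepsilon-Z^\varepsilon|^2$, discard the combined reflection term via (\ref{def5}), absorb the trilinear term with the exponential weight, and control the martingale and its bracket by BDG and the $\sqrt{\varepsilon}$ prefactor. The only divergence is in the final removal of the weight: you use the deterministic lower bound $f_\varepsilon\ge\exp(-\lambda C_{N,T})$ (which indeed follows from Propositions \ref{prop} and \ref{prop1}, since $k^\varepsilon(\omega)\in S_N$ a.s.), whereas the paper instead splits on the event $\{\int_0^T\|u^{k^\varepsilon}\|^2\,ds>M\}$ and uses Markov's inequality---your route is slightly more direct and equally valid here.
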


\begin{proof}
The existence of a unique strong solution of (\ref{ldp}) implies that for every $\varepsilon>0$, there exists a measurable mapping $\Gamma^{\varepsilon}(\cdot):C(0,T;\mathbb{R})\rightarrow X_T$ such that
\begin{equation}
u^{\varepsilon}=\Gamma^{\varepsilon}(W(\cdot)).
\end{equation}
To prove the theorem, it suffices to verify the conditions (i) and (ii) in Theorem \ref{cri}. And we can see that the condition (ii) is exactly the statement of Theorem \ref{4.1}. We are left to verify the condition (i) in Theorem \ref{cri}.\\
Let $\{k^\varepsilon,\varepsilon > 0\}\subset \widetilde{S}_N$ be a given family of stochastic processes. Applying Girsanov theorem it is easy to see that $u^{\varepsilon,k^\varepsilon}=\Gamma^{\varepsilon}\left(W(\cdot)+\frac{1}{\sqrt{\varepsilon}}\int_0^\cdot k^\varepsilon(s)ds\right)$ is the solution of following reflected SEE:
\begin{equation}\label{condition1}
\left\{
\begin{aligned}
d&u^{\varepsilon,k^\varepsilon}(t)+Au^{\varepsilon,k^\varepsilon}(t)dt =f(u^{\varepsilon,k^\varepsilon})dt +B(u^{\varepsilon,k^\varepsilon}(t),u^{\varepsilon,k^\varepsilon}(t))dt\\
&\hspace{5.3cm} + \sqrt{\varepsilon}\sigma(u^{\varepsilon,k^\varepsilon}(t))dW(t)+ \sigma(u^{\varepsilon,k^\varepsilon}(t))k^\varepsilon(t)dt + dL^{\varepsilon,k^\varepsilon}(t), t\geq0,\\
&u^{\varepsilon,k^\varepsilon}(0)=u_0, \quad u_0\in \overline{D}.
\end{aligned}
\right.
\end{equation}
\end{proof}
Moreover, $u^{k^\varepsilon}=\Gamma^0\left(\int_0^\cdot k^\varepsilon(s)ds\right)$ is the solution of reflected SEE:
\begin{equation}
\left\{
\begin{aligned}
du^{k^\varepsilon}&+Au^{k^\varepsilon}(t)dt =f(u^{k^\varepsilon}(t))dt +B(u^{k^\varepsilon}(t),u^{k^\varepsilon}(t))dt + \sigma(u^{k^\varepsilon}(t))k^\varepsilon(t)dt+dL^{k^\varepsilon}(t), t\geq0,\\
 u^k(0)& =u_0, \quad u_0\in \overline{D}.
\end{aligned}
\right.
\end{equation}
The condition (i) in Theorem \ref{cri} will be satisfied if we prove for any $N<\infty$, $\{k^\varepsilon,\varepsilon > 0\}\subset \widetilde{S}_N$ and any $\delta>0$
\begin{equation}\label{c2} 
\lim\limits_{\varepsilon\rightarrow 0}\mathbb{P}\left(|u^{\varepsilon,k^\varepsilon}-u^{k^\varepsilon}|_{X_T}>\delta \right)=0.
\end{equation}
Consider $g_\varepsilon(t)=exp(-\lambda\int_0^t\|u^{k^\varepsilon}(s)\|^2ds)$ for $\lambda>4$ and $t\geq 0$.\\
By $It\hat{o}'s$ formula, we have
\begin{equation}\label{Ivar}
\begin{aligned}
g_\varepsilon(t)|u^{\varepsilon,k^\varepsilon}(t) - u^{k^\varepsilon}(t)|^2\\
=& -\lambda\int_0^t g_\varepsilon(s)\|u^{k^\varepsilon}(s)\|^2|u^{\varepsilon,k^\varepsilon}(s) - u^{k^\varepsilon}(s)|^2 ds\\
 & -2\int_0^t g_\varepsilon(s)(u^{\varepsilon,k^\varepsilon}(s) - u^{k^\varepsilon}(s), A(u^{\varepsilon,k^\varepsilon}(s) - u^{k^\varepsilon}(s)))ds\\
 & +2\int_0^t g_\varepsilon(s)\langle u^{\varepsilon,k^\varepsilon}(s) - u^{k^\varepsilon}(s), \sigma(u^{\varepsilon,k^\varepsilon}(s)) -\sigma(u^{k^\varepsilon}(s))\rangle k^\varepsilon(s)ds\\
 & +2\sqrt{\varepsilon}\int_0^t g_\varepsilon(s)\langle u^{\varepsilon,k^\varepsilon}(s) - u^{k^\varepsilon}(s), \sigma(u^{\varepsilon,k^\varepsilon}(s))\rangle dWs\\
 & +2\int_0^t g_\varepsilon(s)\langle u^{\varepsilon,k^\varepsilon}(s) - u^{k^\varepsilon}(s), f(u^{\varepsilon,k^\varepsilon}(s)) -f(u^{k^\varepsilon}(s))\rangle ds\\
 & +2\int_0^t g_\varepsilon(s)\langle u^{\varepsilon,k^\varepsilon}(s) - u^{k^\varepsilon}(s), B(u^{\varepsilon,k^\varepsilon}(s)) -B(u^{k^\varepsilon}(s))\rangle ds\\
 & +2\int_0^t g_\varepsilon(s)\langle u^{\varepsilon,k^\varepsilon}(s) - u^{k^\varepsilon}(s), L^{\varepsilon,k^\varepsilon}(ds) -L^{k^\varepsilon}(ds)\rangle\\
 & +\varepsilon\int_0^t g_\varepsilon(s)|\sigma(u^{\varepsilon,k^\varepsilon}(s))|^2ds.\\
=&\ I^\varepsilon_1+I^\varepsilon_2+I^\varepsilon_3 +I^\varepsilon_4+I^\varepsilon_5+I^\varepsilon_6 +I^\varepsilon_7+I^\varepsilon_8.\\
\end{aligned}
\end{equation}
From$(\ref{def5})$ we see that $I^\varepsilon_7\leq 0$.
Similarly as in the proof of Proposition \ref{prop2}, we have
\begin{equation}
\begin{aligned}
I^\varepsilon_2&=-2\int_0^t g_\varepsilon(s)\|u^{\varepsilon,k^\varepsilon}(s) - u^{k^\varepsilon}(s)\|^2ds.\\
I^\varepsilon_3&\leq 2\int_0^t g_\varepsilon(s)|k^\varepsilon(s)||u^{\varepsilon,k^\varepsilon}(s) - u^{k^\varepsilon}(s)|^2ds.\\
I^\varepsilon_5&\leq \frac{1}{2}\int_0^t g_\varepsilon(s)\|u^{\varepsilon,k^\varepsilon}(s) - u^{k^\varepsilon}(s)\|^2ds+ C \int_0^t g_\varepsilon(s)|u^{\varepsilon,k^\varepsilon}(s) - u^{k^\varepsilon}(s)|^2ds.\\
I^\varepsilon_6&= 2\int_0^t g_\varepsilon(s)\langle u^{\varepsilon,k^\varepsilon}(s) - u^{k^\varepsilon}(s), B(u^{\varepsilon,k^\varepsilon}(s)-u^{k^\varepsilon}(s),(u^{k^\varepsilon}(s))\rangle ds\\
&\leq 4\int_0^t g_\varepsilon(s)\|u^{\varepsilon,k^\varepsilon}(s) - u^{k^\varepsilon}(s)\||u^{\varepsilon,k^\varepsilon}(s) - u^{k^\varepsilon}(s)|\|u^{k^\varepsilon}(s)\|ds\\
&\leq\int_0^t g_\varepsilon(s)\|u^{\varepsilon,k^\varepsilon}(s) - u^{k^\varepsilon}(s)\|^2ds+4 \int_0^t g_\varepsilon(s)|\|u^{k^\varepsilon}(s)\|^2u^{\varepsilon,k^\varepsilon}(s) - u^{k^\varepsilon}(s)|^2ds.
\end{aligned}
\end{equation}
Substituting the above estimates  back into (\ref{Ivar}),
\begin{equation}
\begin{aligned}
&\sup\limits_{0\leq t \leq T} g_\varepsilon(t)|u^{\varepsilon,k^\varepsilon}(t) - u^{k^\varepsilon}(t)|^2 + \frac{1}{2}\int_0^T g_\varepsilon(s)\|u^{\varepsilon,k^\varepsilon}(s) - u^{k^\varepsilon}(s)\|^2ds\\
&\leq C\int _0^T(1 +|k^\varepsilon(s)|)g_\varepsilon(s)\|u^{\varepsilon,k^\varepsilon}(s) - u^{k^\varepsilon}(s)\|^2ds\\
&\  +\sup\limits_{0\leq t \leq T} |2\sqrt{\varepsilon}\int_0^t g_\varepsilon(s)\langle u^{\varepsilon,k^\varepsilon}(s) - u^{k^\varepsilon}(s), \sigma(u^{\varepsilon,k^\varepsilon}(s))\rangle dWs|\\
&\  +\varepsilon\int_0^t g_\varepsilon(s)|\sigma(u^{\varepsilon,k^\varepsilon}(s))|^2ds.
\end{aligned}
\end{equation}
By the Gronwall's inequality it follows that
\begin{equation}
\begin{aligned}
&\sup\limits_{0\leq t \leq T} g_\varepsilon(t)|u^{\varepsilon,k^\varepsilon}(t) - u^{k^\varepsilon}(t)|^2 + \frac{1}{2}\int_0^T g_\varepsilon(s)\|u^{\varepsilon,k^\varepsilon}(s) - u^{k^\varepsilon}(s)\|^2ds\\
&\leq(M_1^\varepsilon+M_2^\varepsilon)exp\left(C\int_0^T(1 +|k^\varepsilon(s)|)ds\right)\leq C_{(N)}((M_1^\varepsilon+M_2^\varepsilon)),
\end{aligned}
\end{equation}
where
\begin{equation}
\begin{aligned}
&M_1^\varepsilon=\sup\limits_{0\leq t \leq T} |2\sqrt{\varepsilon}\int_0^t g_\varepsilon(s)\langle u^{\varepsilon,k^\varepsilon}(s) - u^{k^\varepsilon}(s), \sigma(u^{\varepsilon,k^\varepsilon}(s))\rangle dWs|\\,
&M_2^\varepsilon=\varepsilon\int_0^t g_\varepsilon(s)|\sigma(u^{\varepsilon,k^\varepsilon}(s))|^2ds.
\end{aligned}
\end{equation}
By Burkholder's inequality and using the fact that $|\sigma(u^{\varepsilon,k^\varepsilon}(s))|\leq C (1+|u^{\varepsilon,k^\varepsilon}(s)|)$, we see that
\begin{equation}
\begin{aligned}
E[M_1^\varepsilon]&\leq C\sqrt{\varepsilon} E [\int_0^T g_\varepsilon^2(s)| u^{\varepsilon,k^\varepsilon}(s) - u^{k^\varepsilon}(s)|^2(1+|u^{\varepsilon,k^\varepsilon}(s)|)^2ds]^\frac{1}{2}\\
&\leq C\sqrt{\varepsilon} E [\sup\limits_{0\leq t \leq T}(1+|u^{\varepsilon,k^\varepsilon}(s)|)\left(\int_0^T g_\varepsilon^2(s)| u^{\varepsilon,k^\varepsilon}(s) - u^{k^\varepsilon}(s)|^2ds\right)^\frac{1}{2}]\\
&\leq C\sqrt{\varepsilon} E [\sup\limits_{0\leq t \leq T}(1+|u^{\varepsilon,k^\varepsilon}(s)|)^2 +\int_0^T g_\varepsilon^2(s)| u^{\varepsilon,k^\varepsilon}(s) - u^{k^\varepsilon}(s)|^2ds]\\
&\rightarrow 0, \quad \text{as }\varepsilon\rightarrow 0.
\end{aligned}
\end{equation}
And it is clear that
\begin{equation}
E[M_2^\varepsilon]\leq \varepsilon C T E[\sup\limits_{0\leq t \leq T}(1+|u^{\varepsilon,k^\varepsilon}(s)|)]\rightarrow 0, \quad \text{as }\varepsilon\rightarrow 0.
\end{equation}
Hence,
\begin{equation}\label{p1}   
E[\sup\limits_{0\leq t \leq T} g_\varepsilon(t)|u^{\varepsilon,k^\varepsilon}(t) - u^{k^\varepsilon}(t)|^2] + E[\int_0^T g_\varepsilon(s)\|u^{\varepsilon,k^\varepsilon}(s) - u^{k^\varepsilon}(s)\|^2ds]\rightarrow 0, \quad \text{as }\varepsilon\rightarrow 0.
\end{equation}

Given $\delta>0$, for any $M>0$ we have
\begin{equation}\label{p} 
\begin{aligned}
&\hspace{-0.4cm}\mathbb{P}\left(|u^{\varepsilon,k^\varepsilon}-u^{k^\varepsilon}|_{X_T}>\delta \right)=\mathbb{P}\left(|u^{\varepsilon,k^\varepsilon}-u^{k^\varepsilon}|^2_{X_T}>\delta^2 \right)\\
&\hspace{-1cm}=\mathbb{P}\left(\sup\limits_{0\leq t \leq T} |u^{\varepsilon,k^\varepsilon}(t) - u^{k^\varepsilon}(t)|^2 + \int_0^T \|u^{\varepsilon,k^\varepsilon}(s) - u^{k^\varepsilon}(s)\|^2ds>\delta^2 \right)\\
&\hspace{-1cm}\leq \mathbb{P}\left(\sup\limits_{0\leq t \leq T} |u^{\varepsilon,k^\varepsilon}(t) - u^{k^\varepsilon}(t)|^2 + \int_0^T \|u^{\varepsilon,k^\varepsilon}(s) - u^{k^\varepsilon}(s)\|^2ds>\delta^2, \int_0^T\|u^{k^\varepsilon}(s)\|^2ds\leq M\right)\\
&\hspace{-1cm} \  +\mathbb{P}\left(\int_0^T\|u^{k^\varepsilon}(s)\|^2ds\geq M\right)\\
&\hspace{-1cm}\leq \mathbb{P}\left(\sup\limits_{0\leq t \leq T}g_\varepsilon(t) |u^{\varepsilon,k^\varepsilon}(t) - u^{k^\varepsilon}(t)|^2 + \int_0^T g_\varepsilon(s)\|u^{\varepsilon,k^\varepsilon}(s) - u^{k^\varepsilon}(s)\|^2ds>exp(-\lambda M)\delta^2, \int_0^T\|u^{k^\varepsilon}(s)\|^2ds\leq M\right)\\
&\hspace{-1cm} \  +\frac{1}{M}E[\int_0^T\|u^{k^\varepsilon}(s)\|^2ds]\\
&\hspace{-1cm}\leq exp(\lambda M)\frac{1}{\delta^2}E[\sup\limits_{0\leq t \leq T}g_\varepsilon(t) |u^{\varepsilon,k^\varepsilon}(t) - u^{k^\varepsilon}(t)|^2 + \int_0^T g_\varepsilon(s)\|u^{\varepsilon,k^\varepsilon}(s) - u^{k^\varepsilon}(s)\|^2ds]+\frac{1}{M}E[\int_0^T\|u^{k^\varepsilon}(s)\|^2ds].\\
\end{aligned}
\end{equation}
As $\{k^\varepsilon,\varepsilon > 0\}\subset \widetilde{S}_N$, we can easily show that $\sup\limits_\varepsilon E[\int_0^T\|u^{k^\varepsilon}(s)\|^2ds]<\infty$.
Now, for any $\eta>0$, we can choose $M>0$ such that
\begin{equation}
\frac{1}{M}E[\int_0^T\|u^{k^\varepsilon}(s)\|^2ds]\leq\eta, \quad \text {for any } \varepsilon >0.
\end{equation}
Then by letting $\varepsilon \rightarrow 0$ in (\ref{p}) and (\ref{p1}) we obtain
\begin{equation}\label{end}
\lim\limits_{\varepsilon\rightarrow 0}\mathbb{P}\left(|u^{\varepsilon,k^\varepsilon}-u^{k^\varepsilon}|_{X_T}>\delta \right)\leq \eta.
\end{equation}
As $\eta$ is arbitrary, assertion (\ref{c2}) follows from (\ref{end}).

\section{LDP of Reflected Stochastic Navier Stokes Equations}
\setcounter{equation}{0}
 \setcounter{definition}{0}
An important application of this paper is to treat the stochastic Navier Stokes Equations on a two dimensional domains $D$, which can be unbounded, e.g., the whole Euclidean space $\mathbb{R}^2$. We will concentrate on this case and mostly follow a recent paper \cite{BF}.

For a natural number $d$ and $p \in [1,\infty)$, let $L^p=L^p(\mathbb{R}^d, \mathbb{R}^d)$ be the classical Lebesgue space of all $\mathbb{R}^d$-valued lebesgue measurable functions $v=(v^1,\cdots,v^d)$ defined on $\mathbb{R}^d$ endowed with the following classical norm
\begin{equation}
\|v\|_{L^p}=\left(\sum_{k=1}^d\|v^k\|^p_{L^p(\mathbb{R}^d)}\right)^{\frac{1}{p}}.
\end{equation}
For $p=\infty$, we set $\|v\|_{L^\infty}=\text{max}_{k=1}^d\|v^k\|_{L^{\infty}(\mathbb{R}^d)}$.\\
Set $J^s=(I-\Delta)^{\frac{s}{2}}$. And define the generalized Sobolev spaces of divergence free vector distributions, for $s\in \mathbb{R}$, as
\begin{equation}
\begin{aligned}
H^{s,p}&=\{u\in \mathcal{S}'(\mathbb{R}^d,\mathbb{R}^d):\|J^s u\|_{L^p}<\infty\},\\
H^{s,p}_{\text{sol}}&=\{ u\in H^{s,p}: \text{div }u = 0\}.
\end{aligned}
\end{equation}
It's well-known that $J^\sigma$ is an isomorphism between $H^{s,p}$ and $H^{s-\sigma,p}$ for $s\in mathbb{R}$ and $ 1<p<\infty$. Moreover, $H^{s_2,p}\subset H^{s_1,p}$ when $s_1<s_2$. For the Hilbert case $p=2$ We set $H=H^{0,2}_{\text{sol}}$ and, for $s \neq 0,H^s=H^{0,2}_{\text{sol}}$, so that $H^s$ is a proper closed subspace of the classical Sobolev space usually denoted by the same symbol. In particular, we put
\begin{equation}
    H=\{v\in L^2(\mathbb{R}^d,\mathbb{R}^d)):\text{div }v = 0\}.
\end{equation}
with scalar product inherited from $L^2(\mathbb{R}^d, \mathbb{R}^d)$.

Denote by $\langle\cdot,\cdot\rangle$ the duality bracket between $(H^{s,p})'$ and $H^{s,p}$ spaces. Note that for $p \in [1,\infty)$, the space $(H^{s,p})'$ can be identified with $(H^{-s,p^{\ast}})$, where $\frac{1}{p}+\frac{1}{p^{\ast}}=1$.

Now we  define the operators appearing in the abstract formulation. Assume $s \in \mathbb{R}$ and $1\leq p <\infty$. Let $A_0=-\Delta$; then $A_0$ is a linear unbounded operator in $H^{s,p}$ and bounded from $H{s+2,p}$ to $H^{s,p}$. Moreover, the space $H^{s,p}_{\text{sol}}$ are invariant w.r.t. $A_0$ and the corresponding operator will be denoted by $A$. We can observe that $A$ is a linear unbounded operator in $H^{s,p}$ and bounded from $H^{s+2,p}_{\text{sol}}$ to $H^{s,p}_{\text{sol}}$.  The operator $-A_0$ generates a contractive and analytic $C-0$-semigroup $\{e^{-tA}\}_{t\geq 0}$ on $H^{s,p}$ and therefore, the operator $-A$ generates a contractive and analytic $C-0$-semigroup $\{e^{-tA}\}_{t\geq 0}$  on $H^{s,p}_{\text{sol}}$. Moreover, for $t > 0$ the operator$e^{-tA}$ is bounded from $H^{s,p}_{\text{sol}}$ into $H^{s',p}_{\text{sol}}$
with $s' > s$ and there exists a constant $M$ (depending on $s' - s$ and $p$) such that
\begin{equation}
    \|e^{-tA}\|_{\mathcal{L}({H^{s,p}_{\text{sol}},H^{s',p}_{\text{sol}}})}\leq M(1+t^{-(s'-s)2}).
\end{equation}
We have $A:H^1\rightarrow H^{-1}$ as a linear bounded operator and
\begin{equation}
    \langle Av,v\rangle =\|\nabla v\|^2_{L^2}, \quad v \in H^1,
\end{equation}
Where
\begin{equation}
    \|\nabla v\|^2_{L^2}=\sum_{k-1}^d \|\nabla v^k\|^2_{L^2}, \quad v \in H^1.\\
\end{equation}

Moreover,
\begin{equation}
    \|v\|^2_{H^1}=\|v\|^2_{L^2} + \|\nabla v\|^2_{L^2}.
\end{equation}

We define a bounded trilinear form $b:H^1\times H^1\times H^1 \rightarrow \mathbb{R}$ by
\begin{equation}
    b(u,v,z)=\int_{\mathbb{R}^d} (u(\xi)\cdot \nabla)v(\xi)\cdot z(\xi) d\xi, \quad u,v,z \in H^1.
\end{equation}
and the corresponding bounded bilinear operator $B:H^1\times H^1\times H^1 \rightarrow H^{-1}$ via the trilinear form
\begin{equation}
    \langle B(u,v),z\rangle = b(u,v,z), \quad u,v,z \in H^1.
\end{equation}
This operator satisfies, for all $u,v,z \in H^1$,
\begin{equation}
    \langle B(u,v),z\rangle=-\langle B(u,z),v\rangle, \quad \langle B(u,v),v\rangle =0.
\end{equation}
Finally, we define the noise  forcing term. Given a real separable Hilbert space $K$ we consider a $K$-cylindrical Wiener process $\{W(t)\}_{t\geq0}$ defined on a stochastic basis $(\Omega,\mathcal{F,\mathbb{F}}=(\mathcal{F}_t)_{t\geq 0},\mathbb{P})$ satisfying the usual conditions. For the covariance $\sigma$ of the noise we make the following assumptions.

\textbf{(G1)} the mapping $\sigma : H\rightarrow\gamma(K;H)$  is well-defined and is a Lipschitz continuous map $G : H \rightarrow \gamma(K;H)$, i.e.
\begin{equation}
    \exists L>0 :\|\sigma(v_1)-\sigma(v_2)\|_{\gamma(K;H)}\leq L\|v_1-v_2\|_H
\end{equation}
for all $v_1,v_2\in H$, where $\gamma(K,H)$ denote by the space of all Hilbert-Schmidt operators from $K$ to $H$ and by $\|\cdot\|_{ \gamma(K,H)}$ we denote the corresponding Hilbert-Schmidt norm.

We consider the stochastic damped Navier-Stokes equations, that is the equations of motion of a viscous incompressible fluid with two forcing terms, one is random and the other one is deterministic. These equations are
\begin{equation}\label {NV}
\left\{
\begin{aligned}
&\partial_t u + [-\nu \Delta u +\gamma u + (u\cdot \nabla)u + \nabla p] dt = \sigma(u) \partial_t W +f dt,\\
& \text{div} u =0,
\end{aligned}
\right.
\end{equation}
where the unknowns are the vector velocity $u = u(t,\xi)$ and the scalar pressure $p = p(t,\xi)$ for $t \geq 0$ and $\xi \in  \mathbb{R}^d$. By $\nu > 0$ we denote the kinematic viscosity and by $\gamma  > 0$ the sticky
viscosity. When $\gamma = 0$ (\ref{NV}) reduce to the classical stochastic
Navier-Stokes equations. The notation $\partial_t W$ on the right hand side is for the space correlated and white in time noise and $f$ is a deterministic forcing term. We consider a multiplicative term $\sigma(u)$ keeping track of the fact that the noise may depend on the velocity.
Projecting equations (\ref{NV}) onto the space $H$ of divergence free vector fields, we get the abstract form of the stochastic damped Navier-Stokes equations (\ref{NV}):
\begin{equation}\label{NV2}
    du(t) =[Au(t) +\gamma u(t) + B(u(t),u(t))]dt = \sigma(u(t)) dW(t) + f(t)dt
\end{equation}
with the initial condition $ u(0)=u_0,$ where the initial velocity $u_0:\Omega \rightarrow H$ is an $\mathcal{F}_0$-measurable random variable .  Here $\gamma >0 $ is fixed and for simplicity we have put $\nu =1$.

Now consider the reflected  stochastic Navier-Stokes equation:
\begin{equation}\label{RNV}
    du(t) =[Au(t) +\gamma u(t) + B(u(t),u(t))]dt = \sigma(u(t)) dW(t) +  f(t)dt +dL(t)
\end{equation}
with the initial condition $ u(0)=u_0\in \overline{D}$, and the skeleton equation:
\begin{equation}\label{RNV}
    du^k(t) =[Au^k(t) +\gamma u^k(t) + B(u^k(t),u^k(t))]dt = \sigma(u^k(t))k(t)dt +  f(t)dt +dL^k(t),
\end{equation}
for $k\in K=L^2([0, T], R)$.

Recall $\Gamma^0(\int_0^\cdot k(s)ds):=u^k$.
Let $u^\varepsilon$ be the solution of the following reflected problem:
\begin{equation}   
\left\{
\begin{aligned}
du^\varepsilon(t)&+Au^\varepsilon(t)dt =f(u^\varepsilon(t))dt +B(u^\varepsilon(t),u^\varepsilon(t))dt + \sqrt{\varepsilon}\sigma(u^\varepsilon(t))dW(t)+dL^\varepsilon(t), t\geq0,\\
 u^\varepsilon(0)& =u_0, \quad u_0\in \overline{D},
\end{aligned}
\right.
\end{equation}

As an application of the main result Theorem 5.1,
we have
\begin{theorem}
   The family $\{u^\varepsilon\}_{\varepsilon >0}$ satisfies a large deviation principle on the space $X_T$ with the rate function $I$ given by
\begin{equation}
I(g):=\inf\limits_{k\in K;g=\Gamma^0(\int_0^\cdot k(s)ds)}\left\{\frac{1}{2}\int_0^T k^2(s)ds\right\}, \quad g\in X_T,
\end{equation}
where the convention $\inf\{\emptyset\}=\infty$.
\end{theorem}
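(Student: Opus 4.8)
The plan is to treat this statement as a direct corollary of the abstract result Theorem \ref{thldp}: the whole task is to recast the reflected damped Navier--Stokes system on $\mathbb{R}^2$ in the abstract form \eqref{ldp} and to check that the structural hypotheses \textbf{(A.1)}--\textbf{(A.3)} (together with the coercivity of the leading operator) are met, after which Theorem \ref{thldp} applies verbatim and yields the LDP with the stated rate function. Concretely, I would take $H=\{v\in L^2(\mathbb{R}^2,\mathbb{R}^2):\operatorname{div}v=0\}$ with its $L^2$ inner product, $V=H^1_{\mathrm{sol}}$, and the Gelfand triple $V\hookrightarrow H\hookrightarrow V^*$. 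The essential modelling decision is to absorb the sticky-viscosity term into the linear part: set $\mathcal{A}:=A+\gamma I$, where $A$ is the Stokes operator ($\langle Av,v\rangle=\|\nabla v\|_{L^2}^2$) and $\gamma>0$. Then $\mathcal{A}$ is self-adjoint and, as explained below, positive definite, with $D(\mathcal{A}^{1/2})=V$ and $\|v\|^2\simeq\|v\|_{H^1}^2$, so the abstract norm $\|\cdot\|$ coincides with the $H^1$-norm of this section.

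Next I would verify the three hypotheses. For the coercivity requirement $(\mathcal{A}v,v)\geq\lambda_1|v|^2$, one computes $(\mathcal{A}v,v)=\|\nabla v\|_{L^2}^2+\gamma|v|^2\geq\gamma|v|^2$, so the condition holds with $\lambda_1=\gamma$. This is precisely the role of the damping parameter $\gamma>0$: on the unbounded domain $\mathbb{R}^2$ no Poincar\'e inequality is available, so the bare Stokes operator is only non-negative, and it is the sticky viscosity that restores the strict positivity demanded by the framework. Hypothesis \textbf{(A.1)} follows because $\sigma$ is globally Lipschitz by \textbf{(G1)}, while the forcing $f$ is a prescribed deterministic term independent of the state and hence trivially Lipschitz in $u$. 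For \textbf{(A.2)}, the antisymmetry $\overline b(u,v,w)=-\overline b(u,w,v)$ is the standard consequence of integrating by parts against a divergence-free field, as already recorded in this section; the growth bound \textbf{(A.2)}b) is obtained from the two-dimensional Ladyzhenskaya inequality $\|v\|_{L^4}\leq C|v|^{1/2}\|\nabla v\|^{1/2}$, which gives $|\overline b(u,v,w)|\leq\|u\|_{L^4}\|\nabla v\|_{L^2}\|w\|_{L^4}\leq C|u|^{1/2}\|u\|^{1/2}\|v\|\,|w|^{1/2}\|w\|^{1/2}$; since \textbf{(A.2)} permits the constant to be arbitrary, this is exactly the required inequality. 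Finally \textbf{(A.3)} is assumed outright. With these in hand, the well-posedness theorem of Section 2 produces the solution $u^\varepsilon$ and its skeleton $u^k=\Gamma^0(\int_0^\cdot k\,ds)$, and Theorem \ref{thldp} delivers the large deviation principle.

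The point requiring the most care is the discrepancy between the noise in the abstract framework and the noise here: Theorem \ref{thldp} is stated for a scalar Brownian motion with $\sigma:H\to H$, whereas this section uses a $K$-cylindrical Wiener process with $\sigma:H\to\gamma(K;H)$ Hilbert--Schmidt valued. I would address this in one of two ways. Either one specialises to the scalar-noise case, in which the statement is immediate; or, to cover genuine cylindrical noise, one re-runs the estimates of Sections 4 and 5 with $|\sigma(\cdot)|$ replaced throughout by the Hilbert--Schmidt norm $\|\sigma(\cdot)\|_{\gamma(K;H)}$ and with the stochastic-integral terms (namely $I_4^\varepsilon$, $I_8^\varepsilon$ and the Burkholder step controlling $M_1^\varepsilon$) reinterpreted for Hilbert-space-valued martingales. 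These modifications are routine because the nonlinear and reflection terms are untouched and only the linear growth $\|\sigma(v)\|_{\gamma(K;H)}\leq C(1+|v|)$ furnished by \textbf{(G1)} is used; the weak-convergence argument of Theorem \ref{4.1} and the tightness-free Condition (i) of Theorem \ref{cri} then go through unchanged, completing the proof.
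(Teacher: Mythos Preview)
Your proposal is correct and follows exactly the paper's approach: the paper states this theorem simply ``as an application of the main result Theorem 5.1'' without any further argument, so treating it as a direct corollary of Theorem \ref{thldp} after verifying \textbf{(A.1)}--\textbf{(A.3)} is precisely what is intended. In fact you supply considerably more detail than the paper does---in particular the role of the damping $\gamma>0$ in restoring coercivity on $\mathbb{R}^2$ and the scalar-versus-cylindrical noise mismatch are points the paper leaves implicit.
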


\vskip 0.5cm

\noindent{\bf Acknowledgement}.
This work is partially supported by National Key R$\&$D program of China (No. 2022 YFA1006001)), National Natural Science Foundation of China (Nos. 12131019, 12371151, 11721101).

\end{document}